\documentclass[smallextended,numbook,runningheads]{svjour3}     % onecolumn (second format)
\smartqed  % flush right qed marks, e.g. at end of proof
\usepackage{graphicx}
\usepackage{amsmath}

\usepackage{mathptmx}      % use Times fonts if available on your TeX system
%
% insert here the call for the packages your document requires
%\usepackage{latexsym}
\usepackage{amsfonts,amssymb,mathdots}
\usepackage{algorithm,algpseudocode}
% etc.
%
% please place your own definitions here and don't use \def but
% \newcommand{}{}
\newcommand{\ten}[1]{\mathcal{#1}}
 % Use Input in the format of Algorithm
 % Use Output in the format of Algorithm
%
% Insert the name of "your journal" with

%\textwidth 5.5in \textheight 7.50in
\journalname{BIT}

\begin{document}

\title{Inheritance Properties and Sum-of-Squares Decomposition of Hankel Tensors: Theory and Algorithms\thanks{The first and the third authors are supported by the National Natural Science Foundation of China under grant 11271084. The second author is supported by the Hong Kong Research Grant Council (Grant No. PolyU 502111, 501212, 501913 and 15302114).
% General acknowledgments should be placed at the end of the article.
}}
%\subtitle{Do you have a subtitle?\\ If so, write it here}

\titlerunning{Hankel Tensors}        % if too long for running head

\author{Weiyang Ding         \and
        Liqun Qi             \and
        Yimin Wei
}

\authorrunning{W. Ding, L. Qi, and Y. Wei} % if too long for running head

\institute{W. Ding \at
              School of Mathematical Sciences, Fudan University, Shanghai 200433, China \\
              \email{dingw11@fudan.edu.cn, weiyang.ding@gmail.com}           %  \\
%             \emph{Present address:} of F. Author  %  if needed
           \and
           L. Qi \at
              Department of Applied Mathematics, the Hong Kong Polytechnic University, Hong Kong \\
              \email{liqun.qi@polyu.edu.hk}
           \and
           Y. Wei \at
              School of Mathematical Sciences and Shanghai Key Laboratory of
Contemporary Applied Mathematics, Fudan University, Shanghai 200433, China \\
              \email{ymwei@fudan.edu.cn, yimin.wei@gmail.com}
}

\date{Received: date / Accepted: date}
% The correct dates will be entered by the editor

\maketitle

\begin{abstract}
  In this paper, we show that if a lower-order Hankel tensor is positive semi-definite (or positive definite, or negative semi-definite, or negative definite, or SOS), then its associated higher-order Hankel tensor with the same generating vector, where the higher order is a multiple of the lower order, is also positive semi-definite (or positive definite, or negative semi-definite, or negative definite, or SOS, respectively).  Furthermore, in this case, the extremal H-eigenvalues of the higher order tensor are bounded by the extremal H-eigenvalues of the lower order tensor, multiplied with some constants. Based on this inheritance property, we give a concrete sum-of-squares decomposition for each strong Hankel tensor. Then we prove the second inheritance property of Hankel tensors, i.e., a Hankel tensor has no negative (or non-positive, or positive, or nonnegative) H-eigenvalues if the associated Hankel matrix of that Hankel tensor has no negative (or non-positive, or positive, or nonnegative, respectively) eigenvalues.  In this case, the extremal H-eigenvalues of the Hankel tensor are also bounded by the extremal eigenvalues of the associated Hankel matrix, multiplied with some constants. The third inheritance property of Hankel tensors is raised as a conjecture.

%Include keywords and mathematical subject classification numbers as needed.
\keywords{Hankel tensor \and Inheritance Property \and Positive semi-definite tensor \and Sum-of-squares \and Convolution}
% \PACS{PACS code1 \and PACS code2 \and more}
\subclass{15A18 \and 15A69 \and 65F10 \and 65F15}
\end{abstract}

\section{Introduction}

Hankel structures are widely employed in data analysis and signal processing. Not only Hankel matrices but also higher-order Hankel tensors arise frequently in many disciplines such as exponential data fitting \cite{Boyer07,Ding15,Papy05,Papy09}, frequency domain subspace identification \cite{Smith14}, multidimensional seismic trace interpolation \cite{Trickett13}, and so on. Furthermore, the positive semi-definite Hankel matrices are most related with the moment problems, and one can refer to \cite{Akhiezer65,Fasino95,Shohat43}. In moment problems, some necessary or sufficient conditions for the existence of a desired measure are given as the positive semi-definiteness of a series of certain Hankel matrices.

An $m^{\rm th}$-order \emph{Hankel tensor} $\ten{H} \in \mathbb{C}^{n_1 \times n_2 \times \dots \times n_m}$ is a multidimensional array whose entries obey a function of the sums of indices, i.e.,
$$
\ten{H}_{i_1,i_2,\dots,i_m} = h_{i_1+i_2+\dots+i_m},\ i_k = 0,1,\dots,n_k-1,\ k = 1,2,\dots,m,
$$
where the vector ${\bf h}$ is called the \emph{generating vector} of this Hankel tensor $\ten{H}$ \cite{Ding15,Li14c,Qi15,Xu15}. Actually, the generating vector and the size parameters totally determine this Hankel tensor. The multiplication of a tensor $\ten{T}$ and a matrix $M$ on the $k^{\rm th}$ mode (see \cite[Chapter 12.4]{Golub13}) is defined by
$$
(\ten{T} \times_k M)_{i_1 \dots i_{k-1} j_k i_{k+1} \dots i_m} := \sum_{i_k=0}^{n-1} \ten{T}_{i_1 i_2 \dots i_m} M_{i_k j_k}.
$$
When the matrix degrades into a vector, Qi \cite{Qi05} introduced some simple but useful notations
\[
\begin{split}
\ten{T} {\bf x}^m &:= \ten{T} \times_1 {\bf x} \times_2 {\bf x} \dots \times_m {\bf x}, \\
\ten{T} {\bf x}^{m-1} &:= \ten{T} \hspace{20pt} \times_2 {\bf x} \dots \times_m {\bf x}.
\end{split}
\]
Each $m^{\rm th}$-order $n$-dimensional square tensor can induce a degree-$m$ multivariate polynomial of $n$ variables
$$
p_{\ten{T}}({\bf x}) := \ten{T} {\bf x}^m = \sum_{i_1,\dots,i_m=0}^{n-1} \ten{T}_{i_1 i_2 \dots i_m} x_{i_1} x_{i_2} \dots x_{i_m}.
$$
Suppose that $m$ is even.  If $p_{\ten{T}}({\bf x})$ is always nonnegative (or positive, or non-positive, or negative) for all nonzero real vectors ${\bf x}$, then  the tensor $\ten{T}$ is called a \emph{positive semi-definite tensor} (or \emph{positive definite tensor}, or \emph{negative semi-definite tensor}, or \emph{negative definite tensor}, respectively) (see \cite{Qi05}). If $p_{\ten{T}}({\bf x})$ can be represented as a sum of squares, then we call the tensor $\ten{T}$ an \emph{SOS tensor} (see \cite{Luo15}). Apparently, an SOS tensor must be positive semi-definite, but the converse is generally not true. If we restrict on Hankel tensors, then it is still an open question whether a positive semi-definite Hankel tensor is also SOS.

Qi \cite{Qi15} showed an inheritance property of Hankel tensors.  The generating vector of a Hankel tensor also generates a Hankel matrix, which is called the associated Hankel matrix of that Hankel tensor \cite{Qi15}.   It was shown in \cite{Qi15} that if the Hankel tensor is of even order and its associated Hankel matrix is positive semi-definite, then the Hankel tensor is also positive semi-definite. In \cite{Qi15}, a Hankel tensor is called a strong Hankel tensor if its associated Hankel matrix is positive semi-definite.   Thus, the above result is that an even order strong Hankel tensor is positive semi-definite. In this paper, we explore more inheritance properties of Hankel tensors.

The first inheritance property of Hankel tensors we will establish in this paper is that if a lower-order Hankel tensor is positive semi-definite (or positive definite, or negative semi-definite, or negative definite, or SOS), then its associated higher-order Hankel tensor with the same generating vector, where the higher order is a multiple of the lower order, is also positive semi-definite (or positive definite, or negative semi-definite, or negative definite, or SOS, respectively).    The inheritance property established in \cite{Qi15} can be regarded as a special case of this inheritance property.  Furthermore, in this case, we show that the extremal H-eigenvalues of the higher order tensor are bounded by the extremal H-eigenvalues of the lower order tensor, multiplied with some constants.

In \cite{Li14c}, it was proved that strong Hankel tensors are SOS tensors, but no concrete SOS decomposition was given there.   In this paper, by using the inheritance property described above, we give a concrete sum-of-squares decomposition for each strong Hankel tensor.

The second inheritance property of Hankel tensors we will establish in this paper is an extension of the inheritance property established in \cite{Qi15} to the odd-order case.
In the common sense, positive semi-definiteness and SOS property are only well-defined for even order tensors.  By \cite{Qi05}, an even order symmetric tensor is positive semi-definite if and only if it has no negative H-eigenvalues.   In this paper, we will show that if the associated Hankel matrix of a Hankel tensor has no negative (or non-positive, or positive, or nonnegative) eigenvalues, then the Hankel tensor has also no negative (or non-positive, or positive, or nonnegative, respectively) H-eigenvalues.   In this case, we show that the extremal H-eigenvalues of the Hankel tensor are also bounded by the extremal eigenvalues of the associated Hankel matrix, multiplied with some constants.

Finally, we raise the third inheritance property of Hankel tensors as a conjecture.

This paper is organized as follows. In Section 2, we first introduce some basic concepts and properties of Hankel tensors. Then by using a convolution formula, we show that if a lower-order Hankel tensor is positive semi-definite  (or positive definite, or negative semi-definite, or negative definite, or SOS), then its associated higher-order Hankel tensor with the same generating vector and a multiple order, is also positive semi-definite  (or positive definite, or negative semi-definite, or negative definite, or SOS, respectively).  In this case, some inequalities to bound the extremal H-eigenvalues of the higher order tensor by the extremal H-eigenvalues of the lower order tensor, multiplied with some constants, are given.
Based on this inheritance property, we give a concrete sum-of-squares decomposition for each strong Hankel tensor. In Section 3, we investigate some structure-preserving Vandermonde decompositions of some particular Hankel tensors, and we prove that each strong Hankel tensor admits an augmented Vandermonde decomposition with all positive coefficients.   With this tool, we show that if the associated Hankel matrix of a Hankel tensor has no negative (or non-positive, or positive, or nonnegative) eigenvalues, then the Hankel tensor has also no negative (or non-positive, or positive, or nonnegative, respectively) H-eigenvalues, i.e., the second inheritance property of Hankel tensors holds. In this case, we show that the extremal H-eigenvalues of the Hankel tensor are also bounded by the extremal eigenvalues of the associated Hankel matrix, multiplied with some constants.  Numerical examples are given in Section 4.  The third inheritance property of Hankel tensors is raised in Section 5 as a conjecture.

\section{The First Inheritance Property of Hankel Tensors}

This section is devoted to the first inheritance property of Hankel tensors. We will prove that if a lower-order Hankel tensor is positive semi-definite or SOS, then a Hankel tensor with the same generating vector but a high multiple order is also positive semi-definite or SOS, respectively.

\subsection{Hankel Tensor-Vector Products}

We shall have a close look at the nature of the Hankel structure first. Ding et al. \cite{Ding15} proposed a fast scheme for multiplying a Hankel tensor by vectors. The main approach is embedding a Hankel tensor into a larger anti-circulant tensor, which can be diagonalized by the Fourier matrices. A special $m^{\rm th}$-order $N$-dimensional Hankel tensor $\ten{C}$ is called an \emph{anti-circulant tensor}, if its generating vector has a period $N$. Let the first $N$ components of its generating vector be ${\bf c} = [c_0,c_1,\dots,c_{N-1}]^\top$. Then the generating vector of $\ten{C}$ has the form
$$
[c_0,c_1,\dots,c_{N-1},\dots,c_0,c_1,\dots,c_{N-1},c_0,c_1,\dots,c_{N-m}]^\top \in \mathbb{C}^{m(N-1)+1}.
$$
Thus we often call the vector ${\bf c}$ the \emph{compressed generating vector} of the anti-circulant tensor $\ten{C}$. Ding et al. proved in \cite[Theorem 3.1]{Ding15} that an $m^{\rm th}$-order $N$-dimensional anti-circulant tensor can be diagonalized by the $N$-by-$N$ Fourier matrix, i.e.,
$$
\ten{C} = \ten{D} \times_1 F_N \times_2 F_N \dots \times_m F_N,
$$
where $F_N = \big(\exp (\frac{2\pi\imath}{N}jk)\big)_{j,k=0}^{N-1}$ ($\imath = \sqrt{-1}$) is the $N$-by-$N$ Fourier matrix, and $\ten{D}$ is a diagonal tensor with diagonal entries ${\rm ifft}({\bf c}) = F_N^\ast {\bf c}$. (Here, ``{\rm ifft}'' is an abbreviation of ``inverse fast Fourier transform''.) Then given $m$ vectors ${\bf y}_1, {\bf y}_2, \dots, {\bf y}_m \in \mathbb{C}^{N}$, we can calculate the anti-circulant tensor-vector product by
$$
\ten{C} \times_1 {\bf y}_1 \times_2 {\bf y}_2 \dots \times_m {\bf y}_m = (F_N^\ast {\bf c})^\top \big(F_N {\bf y}_1 .\ast F_N {\bf y}_2 .\ast \cdots .\ast F_N {\bf y}_m\big),
$$
where ``$.\ast$" is a Matlab-type notation for multiplying two vectors component-by-component, and $F_N {\bf y}_k$ and $F_N^\ast {\bf c}$ can be realized via ${\rm fft}$ and ${\rm ifft}$, respectively.

Let $\ten{H}$ be an $m^{\rm th}$-order Hankel tensor of size $n_1 \times n_2 \times \dots \times n_m$ and ${\bf h}$ be its generating vector. Taking the vector ${\bf h}$ as the compressed generating vector, we can also form an anti-circulant tensor $\ten{C}_{\ten{H}}$ of order $m$ and dimension $N = n_1+ \dots + n_m -m + 1$. Interestingly, we find out that the Hankel tensor $\ten{H}$ is exactly the first leading principal subtensor of $\ten{C}_{\ten{H}}$, that is, $\ten{H} = \ten{C}_{\ten{H}}(1:n_1,1:n_2,\dots,1:n_m)$. Hence, the Hankel tensor-vector product $\ten{H} \times_1 {\bf x}_1 \times_2 {\bf x}_2 \dots \times_m {\bf x}_m$ is equal to the anti-circulant tensor-vector product
$$
\ten{C}_{\ten{H}} \times_1 \begin{bmatrix} {\bf x}_1 \\ {\bf 0} \end{bmatrix} \times_2 \begin{bmatrix} {\bf x}_2 \\ {\bf 0} \end{bmatrix} \dots \times_m \begin{bmatrix} {\bf x}_m \\ {\bf 0} \end{bmatrix},
$$
where ${\bf 0}$ denotes an all-zero vector of proper size. Thus it can be computed via
$$
\ten{H} \times_1 {\bf x}_1 \times_2 {\bf x}_2 \dots \times_m {\bf x}_m = (F_N^\ast {\bf h})^\top \bigg(F_N \begin{bmatrix} {\bf x}_1 \\ {\bf 0} \end{bmatrix} .\ast F_N \begin{bmatrix} {\bf x}_2 \\ {\bf 0} \end{bmatrix} .\ast \cdots .\ast F_N \begin{bmatrix} {\bf x}_m \\ {\bf 0} \end{bmatrix}\bigg).
$$
Particularly, when $\ten{H}$ is square and all the vectors are the same, i.e., $n := n_1 = \dots = n_m$ and ${\bf x} := {\bf x}_1 = \dots = {\bf x}_m$, the homogeneous polynomial can be evaluated via
\begin{equation}\label{eq_tvp}
\ten{H} {\bf x}^m = (F_N^\ast {\bf h})^\top \bigg(F_N \begin{bmatrix} {\bf x} \\ {\bf 0} \end{bmatrix}\bigg)^{[m]},
\end{equation}
where $N = mn-m+1$, and ${\bf v}^{[m]} = [v_1^m, v_2^m, \dots, v_N^m]^\top$ stands for the componentwise $m^{\rm th}$ power of the vector ${\bf v}$. Moreover, this scheme has an analytic interpretation.

\subsection{A Convolution Formula}

In matrix theory, the multiplications of most structured matrices, such as Toeplitz, Hankel, Vandermonde, Cauchy, etc., with vectors have their own analytic interpretations. Olshevsky and Shokrollahi \cite{Olshevsky01} listed several important connections between fundamental analytic algorithms and structured
matrix-vector multiplications. They claimed that there is a close relationship between Hankel matrices and discrete convolutions. And we will see shortly that this is also true for Hankel tensors.

We first introduce some basic facts about discrete convolutions. Let two vectors ${\bf u} \in \mathbb{C}^{n_1}$ and ${\bf v} \in \mathbb{C}^{n_2}$. Then their \emph{convolution} ${\bf w} = {\bf u} \ast {\bf v} \in \mathbb{C}^{n_1+n_2-1}$ is a longer vector defined by
$$
w_k = \sum_{j=0}^k u_j v_{k-j} \quad \text{for } k = 0,1,\dots,n_1+n_2-2,
$$
where $u_j = 0$ when $j \geq n_1$ and $v_j = 0$ when $j \geq n_2$. Denote $p_{\bf u}(\xi)$ and $p_{\bf v}(\xi)$ as the polynomials whose coefficients are ${\bf u}$ and ${\bf v}$, respectively, i.e.,
$$
p_{\bf u}(\xi) = u_0 + u_1 \xi + \dots + u_{n_1-1} \xi^{n_1-1}, \quad p_{\bf v}(\xi) = v_0 + v_1 \xi + \dots + v_{n_2-1} \xi^{n_2-1}.
$$
Then we can verify easily that ${\bf u} \ast {\bf v}$ consists of the coefficients of the product $p_{\bf u}(\xi) \cdot p_{\bf v}(\xi)$. Another important property of discrete convolutions is that
$$
{\bf u} \ast {\bf v} = V^{-1}\bigg(V\begin{bmatrix}{\bf u}\\{\bf 0}\end{bmatrix} .\ast V\begin{bmatrix}{\bf v}\\{\bf 0}\end{bmatrix}\bigg)
$$
for an arbitrary $(n_1+n_2-1)$-by-$(n_1+n_2-1)$ nonsingular Vandermonde matrix $V$. In applications, the Vandermonde matrix is often taken as the Fourier matrices, since we have fast algorithms for discrete Fourier transforms.

Similarly, if there are more vectors ${\bf u}_1,{\bf u}_2,\dots,{\bf u}_m$, then their convolution is equal to
\begin{equation}\label{eq_conv}
{\bf u}_1 \ast {\bf u}_2 \ast \dots \ast {\bf u}_m = V^{-1}\bigg(V\begin{bmatrix}{\bf u}_1\\{\bf 0}\end{bmatrix} .\ast V\begin{bmatrix}{\bf u}_2\\{\bf 0}\end{bmatrix} .\ast \cdots .\ast V\begin{bmatrix}{\bf u}_m\\{\bf 0}\end{bmatrix}\bigg),
\end{equation}
where $V$ is a nonsingular Vandermonde matrix. Comparing \eqref{eq_tvp} and \eqref{eq_conv}, we can write immediately that
\begin{equation}\label{eq_tvp_conv}
\ten{H} {\bf x}^m = {\bf h}^\top (\underbrace{{\bf x} \ast {\bf x} \ast \dots \ast {\bf x}}_m) =: {\bf h}^\top {\bf x}^{\ast m}.
\end{equation}
Employing this convolution formula for Hankel tensor-vector products, we can derive the inheritability of positive semi-definiteness and SOS property of Hankel tensors from the lower-order case to the higher-order case.

\subsection{Lower-Order Implies Higher-Order}

Use $\ten{H}_m$ to denote an $m^{\rm th}$-order $n$-dimensional Hankel tensor with the generating vector ${\bf h} \in \mathbb{R}^{mn-m+1}$, where $m$ is even and $n = qk-q+1$ for some integers $q$ and $k$. Then by the convolution formula \eqref{eq_tvp_conv}, we have $\ten{H}_m {\bf x}^m = {\bf h}^\top {\bf x}^{\ast m}$ for an arbitrary vector ${\bf x} \in \mathbb{C}^n$. Assume that $\ten{H}_{qm}$ is a $(qm)^{\rm th}$-order $k$-dimensional Hankel tensor that shares the same generating vector ${\bf h}$ with $\ten{H}_m$. Similarly, it holds that $\ten{H}_{qm} {\bf y}^{qm} = {\bf h}^\top {\bf y}^{\ast qm}$ for an arbitrary vector ${\bf y} \in \mathbb{C}^k$.

If $\ten{H}_m$ is positive semi-definite, then it is equivalent to say that $\ten{H}_m {\bf x}^m = {\bf h}^\top {\bf x}^{\ast m} \geq 0$ for all ${\bf x} \in \mathbb{R}^n$. Note that ${\bf y}^{\ast qm} = ({\bf y}^{\ast q})^{\ast m}$. Thus for an arbitrary vector ${\bf y} \in \mathbb{R}^k$, we have
$$
\ten{H}_{qm} {\bf y}^{qm} = {\bf h}^\top {\bf y}^{\ast qm} = {\bf h}^\top ({\bf y}^{\ast q})^{\ast m} = \ten{H}_m ({\bf y}^{\ast q})^m \geq 0.
$$
Therefore, the higher-order but lower-dimensional Hankel tensor $\ten{H}_{qm}$ is also positive semi-definite. Furthermore, if $\ten{H}_m$ is positive definite, i.e., $\ten{H}_m {\bf x}^m >0$ for all nonzero vector ${\bf x} \in \mathbb{R}^n$, then $\ten{H}_{qm}$ is also positive definite.  We may also derive the negative definite and negative semi-definite case similarly.

If $\ten{H}_m$ is SOS, then there are some multivariate polynomials $p_1,p_2,\dots,p_r$ such that for each ${\bf x} \in \mathbb{R}^n$
$$
\ten{H}_m {\bf x}^m = {\bf h}^\top {\bf x}^{\ast m} = p_1({\bf x})^2 + p_2({\bf x})^2 + \dots + p_r({\bf x})^2.
$$
Thus we also have for each ${\bf y} \in \mathbb{R}^k$
\begin{equation}\label{eq_sos}
\ten{H}_{qm} {\bf y}^{qm} = \ten{H}_m ({\bf y}^{\ast q})^m = p_1({\bf y}^{\ast q})^2 + p_2({\bf y}^{\ast q})^2 + \dots + p_r({\bf y}^{\ast q})^2.
\end{equation}
From the definition of discrete convolutions, we know that ${\bf y}^{\ast q}$ is also a multivariate polynomial about ${\bf y}$. Therefore, the higher-order Hankel tensor $\ten{H}_{qm}$ is also SOS. Moreover, the SOS rank, i.e., the minimum number of squares in the sum-of-squares representations (see \cite{ChenH15}), of $\ten{H}_{qm}$ is no larger than the SOS rank of $\ten{H}_m$. Hence, we summarize the inheritability of positive semi-definiteness and SOS property in the following theorem.

\begin{theorem}\label{thm_inherit}
If an $m^{\rm th}$-order Hankel tensor is positive (negative) (semi-)definite, then the $(qm)^{\rm th}$-order Hankel tensor with the same generating vector is also positive (negative) (semi-)definite for an arbitrary positive integer $q$. If an $m^{\rm th}$-order Hankel tensor is SOS, then the $(qm)^{\rm th}$-order Hankel tensor with the same generating vector is also SOS with no larger SOS rank for an arbitrary positive integer $q$.
\end{theorem}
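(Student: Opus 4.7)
The plan is to invoke the convolution formula $\ten{H} {\bf x}^m = {\bf h}^\top {\bf x}^{\ast m}$ from \eqref{eq_tvp_conv} for both Hankel tensors and exploit the associativity of discrete convolution. First I would observe that the $m$-fold convolution of the vector ${\bf y}^{\ast q}$ equals the $(qm)$-fold convolution of ${\bf y}$, so for any ${\bf y} \in \mathbb{R}^k$,
$$
\ten{H}_{qm} {\bf y}^{qm} = {\bf h}^\top {\bf y}^{\ast qm} = {\bf h}^\top ({\bf y}^{\ast q})^{\ast m} = \ten{H}_m ({\bf y}^{\ast q})^m.
$$
Since ${\bf y}^{\ast q}$ is a real vector whenever ${\bf y}$ is real, the positive (or negative) semi-definiteness of $\ten{H}_m$ transfers immediately to $\ten{H}_{qm}$ by reading off the right-hand side.

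For the strict (positive/negative definite) cases I would need the auxiliary fact that ${\bf y}^{\ast q} \neq \mathbf{0}$ whenever ${\bf y} \neq \mathbf{0}$. This follows from the polynomial interpretation of convolution: ${\bf y}^{\ast q}$ is the coefficient vector of $p_{\bf y}(\xi)^q$, and a nonzero polynomial raised to a positive integer power remains nonzero because the polynomial ring over $\mathbb{R}$ has no zero divisors. Thus $\ten{H}_m ({\bf y}^{\ast q})^m > 0$ for every nonzero ${\bf y}$ whenever $\ten{H}_m$ is positive definite, and the negative definite and negative semi-definite cases follow by symmetry.

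For the SOS assertion, suppose $\ten{H}_m {\bf x}^m = \sum_{i=1}^r p_i({\bf x})^2$ with polynomials $p_1,\dots,p_r$. Substituting ${\bf x} = {\bf y}^{\ast q}$ into the previously derived identity gives
$$
\ten{H}_{qm} {\bf y}^{qm} = \sum_{i=1}^r p_i({\bf y}^{\ast q})^2.
$$
Each entry of ${\bf y}^{\ast q}$ is a polynomial in the entries of ${\bf y}$ (namely, a coefficient of $p_{\bf y}(\xi)^q$), so each $p_i({\bf y}^{\ast q})$ is itself a polynomial in ${\bf y}$. Hence $\ten{H}_{qm}$ is SOS with at most $r$ squares, establishing the SOS-rank bound.

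The argument is essentially a clean consequence of the convolution formula and the identification of convolution with polynomial multiplication; the only subtlety beyond the discussion preceding the theorem is verifying that the substitution ${\bf y} \mapsto {\bf y}^{\ast q}$ is injective on nonzero vectors, which I expect to be the only step deserving explicit justification. I do not anticipate any genuine obstacle.
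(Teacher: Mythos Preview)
Your proposal is correct and follows essentially the same approach as the paper: both arguments hinge on the convolution identity $\ten{H}_{qm}{\bf y}^{qm}=\ten{H}_m({\bf y}^{\ast q})^m$ and then read off (semi-)definiteness and the SOS decomposition by substitution. Your explicit justification that ${\bf y}^{\ast q}\neq\mathbf{0}$ for nonzero ${\bf y}$ (via the polynomial-ring argument) is a detail the paper leaves implicit, so your write-up is, if anything, slightly more complete.
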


Let $\ten{T}$ be an $m^{\rm th}$-order $n$-dimensional tensor and ${\bf x} \in \mathbb{C}^n$ be a vector. Recall that $\ten{T}{\bf x}^{m-1}$ is a vector with $(\ten{T}{\bf x}^{m-1})_i = \sum_{i_2,\dots,i_m=1}^n \ten{T}_{i i_2 \dots i_m} x_{i_2} \dots x_{i_m}$. If there is a real scalar $\lambda$ and a nonzero real vector ${\bf x} \in \mathbb{R}^n$ such that $\ten{T}{\bf x}^{m-1} = \lambda {\bf x}^{[m-1]}$, where ${\bf x}^{[m-1]} =: [x_1^{m-1},x_2^{m-1},\dots,x_n^{m-1}]^\top$, then we call $\lambda$ an \emph{H-eigenvalue} of the tensor $\ten{T}$ and ${\bf x}$ a corresponding \emph{H-eigenvector}. This concept was first introduced by Qi \cite{Qi05}, and H-eigenvalues are shown to be very essential for investigating a tensor. By \cite[Theorem 5]{Qi05}, we know that an even order symmetric tensor is positive (semi-)definite if and only if all its H-eigenvalues are positive (nonnegative). Applying the convolution formula, we can further obtain a quantified result about the extremal H-eigenvalues of Hankel tensors.

\begin{theorem}  \label{thm_quant}
  Let $\ten{H}_m$ and $\ten{H}_{qm}$ be two Hankel tensors with the same generating vector and of order $m$ and $qm$, respectively, where $m$ is even. Denote the minimal and the maximal H-eigenvalue of a tensor as $\lambda_{\min}(\cdot)$ and $\lambda_{\max}(\cdot)$, respectively. Then
  $$
  \lambda_{\min}(\ten{H}_{qm}) \geq
  \left\{\begin{array}{ll}
  c_1 \cdot \lambda_{\min}(\ten{H}_{m}), & \text{if } \ten{H}_{qm} \text{ is positive semi-definite}, \\
  c_2 \cdot \lambda_{\min}(\ten{H}_{m}), & \text{otherwise},
  \end{array}\right.
  $$
  and
  $$
  \lambda_{\max}(\ten{H}_{qm}) \leq
  \left\{\begin{array}{ll}
  c_1 \cdot \lambda_{\max}(\ten{H}_{m}), & \text{if } \ten{H}_{qm} \text{ is negative semi-definite}, \\
  c_2 \cdot \lambda_{\max}(\ten{H}_{m}), & \text{otherwise},
  \end{array}\right.
  $$
  where $c_1 = \min_{{\bf y}\in\mathbb{R}^{k}} {\|{\bf y}^{\ast q}\|_{m}^{m}}/{\|{\bf y}\|_{qm}^{qm}}$ and $c_2 = \max_{{\bf y}\in\mathbb{R}^{k}} {\|{\bf y}^{\ast q}\|_{m}^{m}}/{\|{\bf y}\|_{qm}^{qm}}$ are positive constants depending on $m$, $n$, and $q$.
\end{theorem}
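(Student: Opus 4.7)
The plan is to combine the convolution identity $\ten{H}_{qm}{\bf y}^{qm} = \ten{H}_m({\bf y}^{\ast q})^m$, obtained in the proof of Theorem~\ref{thm_inherit}, with the standard variational characterisation of the extremal H-eigenvalues of an even-order symmetric tensor. Namely, for even $m$ a symmetric tensor $\ten{T}$ satisfies
\[
\lambda_{\min}(\ten{T})\,\|{\bf x}\|_m^m \;\le\; \ten{T}{\bf x}^m \;\le\; \lambda_{\max}(\ten{T})\,\|{\bf x}\|_m^m
\]
for every real ${\bf x}$, with equality attained on the corresponding H-eigenvectors; this is immediate from the Lagrange conditions for $\min/\max \ten{T}{\bf x}^m$ on the sphere $\|{\bf x}\|_m^m = 1$, cf.~\cite{Qi05}. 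Substituting ${\bf x}={\bf y}^{\ast q}$ and invoking the identity above converts the sandwich into
\[
\lambda_{\min}(\ten{H}_m)\,\|{\bf y}^{\ast q}\|_m^m \;\le\; \ten{H}_{qm}{\bf y}^{qm} \;\le\; \lambda_{\max}(\ten{H}_m)\,\|{\bf y}^{\ast q}\|_m^m
\]
for every ${\bf y}\in\mathbb{R}^k$.

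Next I would divide through by $\|{\bf y}\|_{qm}^{qm}$ and take the infimum over ${\bf y}\ne{\bf 0}$ on the left-hand side, which equals $\lambda_{\min}(\ten{H}_{qm})$. Because a negative coefficient flips infimum into supremum, the outcome is $c_1\cdot\lambda_{\min}(\ten{H}_m)$ when $\lambda_{\min}(\ten{H}_m)\ge 0$ and $c_2\cdot\lambda_{\min}(\ten{H}_m)$ when $\lambda_{\min}(\ten{H}_m)<0$. The mirror argument on the right-hand bound, taking the supremum and pairing it with $\lambda_{\max}(\ten{H}_m)$, yields $c_1\cdot\lambda_{\max}(\ten{H}_m)$ when $\lambda_{\max}(\ten{H}_m)\le 0$ and $c_2\cdot\lambda_{\max}(\ten{H}_m)$ otherwise.

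To reconcile this sign dichotomy, naturally phrased in terms of $\ten{H}_m$, with the theorem's case split on the sign-definiteness of $\ten{H}_{qm}$, I would invoke Theorem~\ref{thm_inherit}. Its contrapositive says that if $\ten{H}_{qm}$ fails to be positive semi-definite then $\lambda_{\min}(\ten{H}_m)$ must itself be negative, so the previous step delivers the $c_2$-bound directly. If $\ten{H}_{qm}$ is positive semi-definite and $\lambda_{\min}(\ten{H}_m)\ge 0$ the step gives the $c_1$-bound; and in the remaining subcase where $\ten{H}_{qm}$ is positive semi-definite but $\lambda_{\min}(\ten{H}_m)<0$, the inequality $\lambda_{\min}(\ten{H}_{qm})\ge 0 > c_1\,\lambda_{\min}(\ten{H}_m)$ holds trivially. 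An identical reconciliation through negative semi-definiteness handles the $\lambda_{\max}$ statement.

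Finally I would verify that $0<c_1\le c_2<\infty$: both ratios are homogeneous of degree zero in ${\bf y}$, so their extrema are attained on the compact sphere $\{\|{\bf y}\|_{qm}=1\}$; and for ${\bf y}\ne{\bf 0}$ the generating polynomial $p_{\bf y}(\xi)$ is nonzero, hence so is $p_{\bf y}(\xi)^q$, forcing ${\bf y}^{\ast q}\ne{\bf 0}$ and $\|{\bf y}^{\ast q}\|_m>0$. The main obstacle is not any single hard inequality but the bookkeeping of the case analysis linking the sign of $\lambda_{\min}/\lambda_{\max}(\ten{H}_m)$ to the sign-definiteness of $\ten{H}_{qm}$, so that the $c_1$-versus-$c_2$ split in the statement is recovered verbatim.
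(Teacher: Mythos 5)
Your proposal is correct and rests on exactly the same ingredients as the paper's proof: the Rayleigh-quotient characterisation of extremal H-eigenvalues from \cite{Qi05}, the convolution identity $\ten{H}_{qm}{\bf y}^{qm}=\ten{H}_m({\bf y}^{\ast q})^m$, and the bound $c_1\le\|{\bf y}^{\ast q}\|_m^m/\|{\bf y}\|_{qm}^{qm}\le c_2$. The only cosmetic difference is the bookkeeping: you split on the sign of $\lambda_{\min}(\ten{H}_m)$ and then invoke the contrapositive of Theorem~\ref{thm_inherit} to recover the paper's case split on the definiteness of $\ten{H}_{qm}$, whereas the paper splits on the sign of $\ten{H}_{qm}{\bf y}^{qm}$ directly (and, in the non-PSD case, works with the optimal $\widehat{\bf y}$ rather than the full sandwich); both amount to the same argument.
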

\begin{proof}
  Since $\ten{H}_m$ and $\ten{H}_{qm}$ are even order symmetric tensors, from \cite[Theorem 5]{Qi05} we have
  $$
  \lambda_{\min}(\ten{H}_{m})
  = \min_{{\bf x}\in\mathbb{R}^{n}} \frac{\ten{H}_{m} {\bf x}^{m}}{\|{\bf x}\|_{m}^{m}}
  \quad\text{and}\quad
  \lambda_{\min}(\ten{H}_{qm})
  = \min_{{\bf y}\in\mathbb{R}^{k}} \frac{\ten{H}_{qm} {\bf y}^{qm}}{\|{\bf y}\|_{qm}^{qm}},
  $$
  where $n = qk-q+1$.

  If $\ten{H}_{qm}$ is positive semi-definite, i.e., $\ten{H}_{qm} {\bf y}^{qm} \geq 0$ for all ${\bf y}\in\mathbb{R}^{k}$, then we denote $c_1 = \min_{{\bf y}\in\mathbb{R}^{k}} {\|{\bf y}^{\ast q}\|_{m}^{m}}/{\|{\bf y}\|_{qm}^{qm}}$, which is a constant depending only on $m$, $n$, and $q$. Then by the convolution formula proposed above, we have
  $$
  \lambda_{\min}(\ten{H}_{qm})
  \geq c_1 \cdot \min_{{\bf y}\in\mathbb{R}^{k}} \frac{\ten{H}_{qm} {\bf y}^{qm}}{\|{\bf y}^{\ast q}\|_{m}^{m}}
  = c_1 \cdot \min_{{\bf y}\in\mathbb{R}^{k}} \frac{\ten{H}_{m} ({\bf y}^{\ast q})^m}{\|{\bf y}^{\ast q}\|_{m}^{m}}
  \geq c_1 \cdot \min_{{\bf x}\in\mathbb{R}^{n}} \frac{\ten{H}_{m} {\bf x}^{m}}{\|{\bf x}\|_{m}^{m}}
  = c_1 \cdot \lambda_{\min}(\ten{H}_{m}).
  $$

  If $\ten{H}_{qm}$ is not positive semi-definite, then we denote $c_2 = \max_{{\bf y}\in\mathbb{R}^{k}} {\|{\bf y}^{\ast q}\|_{m}^{m}}/{\|{\bf y}\|_{qm}^{qm}}$. Let $\widehat{\bf y}$ be a vector in $\mathbb{R}^k$ such that $\lambda_{\min}(\ten{H}_{qm}) = {\ten{H}_{qm} \widehat{\bf y}^{qm}}/{\|\widehat{\bf y}\|_{qm}^{qm}} < 0$. Then
  $$
  \lambda_{\min}(\ten{H}_{qm})
  \geq c_2 \cdot \frac{\ten{H}_{qm} \widehat{\bf y}^{qm}}{\|\widehat{\bf y}^{\ast q}\|_{m}^{m}}
  = c_2 \cdot \frac{\ten{H}_{m} (\widehat{\bf y}^{\ast q})^m}{\|\widehat{\bf y}^{\ast q}\|_{m}^{m}}
  \geq c_2 \cdot \min_{{\bf x}\in\mathbb{R}^{n}} \frac{\ten{H}_{m} {\bf x}^{m}}{\|{\bf x}\|_{m}^{m}}
  = c_2 \cdot \lambda_{\min}(\ten{H}_{m}).
  $$
  Thus we obtain a lower bound of the minimal H-eigenvalue of $\ten{H}_{qm}$, whenever this tensor is positive semi-definite or not. The proof of the upper bound of the maximal H-eigenvalue of $\ten{H}_{qm}$ is similar.   \qed
\end{proof}

\subsection{SOS decomposition of Strong Hankel Tensors}

When the lower order $m$ in Theorem \ref{thm_inherit} equals $2$, i.e., matrix case,  the $(2q)^{\rm th}$-order Hankel tensor sharing the same generating vector with this positive semi-definite Hankel matrix is called a \emph{strong Hankel tensor}. We shall discuss strong Hankel tensors in detail in later sections. Now we focus on how to write out an SOS decomposition of a strong Hankel tensor following the formula \eqref{eq_sos}. Li et al. \cite{Li14c} showed that even order strong Hankel tensors are SOS tensors. However, their proof is not constructive, and no concrete SOS decomposition is given.

For an arbitrary Hankel matrix $H$ generated by ${\bf h}$, we can compute its \emph{Takagi factorization} efficiently by the algorithm proposed by Qiao et al. \cite{Qiao09a}, where only the generating vector rather than the whole Hankel matrix is required to store. The Takagi factorization can be written into $H = U D U^\top$, where $U = [{\bf u}_1,{\bf u}_2,\dots,{\bf u}_r]$ is a column unitary matrix ($U^\ast U = I$) and $D = {\rm diag}(d_1,d_2,\dots,d_r)$ is a diagonal matrix. When the matrix is real, the Takagi factorization is exactly the singular value decomposition of the Hankel matrix $H$. Further when $H$ is positive semi-definite, the diagonal matrix $D$ is with all nonnegative diagonal entries. Thus the polynomial ${\bf x}^\top H {\bf x}$ can be expressed as a sum of squares $p_1({\bf x})^2 + p_2({\bf x})^2 + \dots + p_r({\bf x})^2$, where
$$
p_k({\bf x}) = d_k^{1/2} {\bf u}_k^\top {\bf x},\quad k = 1,2,\dots,r.
$$

Following the formula \eqref{eq_sos}, the $2q$-degree polynomial $\ten{H}_{2q} {\bf y}^{2q}$ can also be written into a sum of squares $q_1({\bf y})^2 + q_2({\bf y})^2 + \dots + q_r({\bf y})^2$, where
$$
q_k({\bf y}) = d_k^{1/2} {\bf u}_k^\top {\bf y}^{\ast q},\quad k = 1,2,\dots,r.
$$
Recall that any homogenous polynomial is associated with a symmetric tensor. And an interesting observation is that the homogenous polynomial $q_k({\bf y})$ is associated with a $q^{\rm th}$-order Hankel tensor generated by $d_k^{1/2} {\bf u}_k$. Thus we determine an SOS decomposition of a strong Hankel tensor $\ten{H}_{2q}$ by $r$ vectors $d_k^{1/2} {\bf u}_k$ ($k=1,2,\dots,r$). And we summarize the above procedure into the following algorithm.

\begin{algorithm}[htbp]
\caption{ An SOS decomposition of a strong Hankel tensor.}
\label{alg_sos}
\begin{algorithmic}[1]
\Require
The generating vector ${\bf h}$ of a strong Hankel tensor;
\Ensure
An SOS decomposition $q_1({\bf y})^2 + q_2({\bf y})^2 + \dots + q_r({\bf y})^2$ of this Hankel tensor;
\State Compute the Takagi factorization of the Hankel matrix generated by ${\bf h}$: $H = UDU^\top$;
\State ${\bf q}_k = d_k^{1/2} {\bf u}_k$ for $k = 1,2,\dots,r$;
\State {\bf Then} ${\bf q}_k$ generates a $q^{\rm th}$-order Hankel tensor $\ten{Q}_k$ as the coefficient tensor of each term $q_k(\cdot)$ in the SOS decomposition for $k = 1,2,\dots,r$;
\end{algorithmic}
\end{algorithm}

\section{The Second Inheritance Property of Hankel Tensors}

In this section, we prove the second inheritance property of Hankel tensors, i.e., if the associated Hankel matrix of a Hankel tensor has no negative (or non-positive, or positive, or nonnegative) eigenvalues, then that Hankel tensor has no negative (or non-positive, or positive, or nonnegative, respectively) H-eigenvalues.  A basic tool to prove this is an augmented Vandermonde decomposition with positive coefficients.

\subsection{Strong Hankel Tensors}

Let $\ten{H}$ be an $m^{\rm th}$-order $n$-dimensional Hankel tensor generated by the vector ${\bf h} \in \mathbb{R}^{mn-m+1}$. Then the square Hankel matrix $H$ also generated by the vector ${\bf h}$ is called the \emph{associated Hankel matrix} of the Hankel tensor $\ten{H}$. If the associated Hankel matrix of a Hankel tensor $\ten{H}$ is positive semi-definite, then we call this Hankel tensor $\ten{H}$ a \emph{strong Hankel tensor} (see \cite{Qi15}). When $m$ is even, we immediately know that an even order strong Hankel tensor must be positive semi-definite by Theorem \ref{thm_inherit}, which has been proved in \cite[Theorem 3.1]{Qi15}.

Qi \cite{Qi15} also introduced the \emph{Vandermonde decomposition} of a Hankel tensor
\begin{equation}\label{eq_VD}
\ten{H} = \sum_{k=1}^r \alpha_k {\bf v}_k^{\circ m},
\end{equation}
where ${\bf v}_k$ is in the Vandermonde form $\big[1,\xi_k,\xi_k^2,\dots,\xi_k^{n-1}\big]^\top$, ${\bf v}^{\circ m} :=\underbrace{ {\bf v} \circ {\bf v} \circ \dots \circ {\bf v}}_m$ is a rank-one tensor, and the outer product is defined by
$$
({\bf v}_1 \circ {\bf v}_2 \circ \dots \circ {\bf v}_m)_{i_1 i_2 \dots i_m} = ({\bf v}_1)_{i_1} ({\bf v}_2)_{i_2} \cdots ({\bf v}_m)_{i_m}.
$$
The Vandermonde decomposition is equivalent to factorize the generating vector of $\ten{H}$, i.e.,
$$
\begin{bmatrix}
h_0 \\ h_1 \\ h_2 \\ \vdots \\ h_r
\end{bmatrix}
= \begin{bmatrix}
1 & 1 & \cdots & 1 \\
\xi_1 & \xi_2 & \cdots & \xi_r \\
\xi_1^2 & \xi_2^2 & \cdots & \xi_r^2 \\
\vdots & \vdots & \vdots & \vdots \\
\xi_1^{mn-m} & \xi_2^{mn-m} & \cdots & \xi_r^{mn-m}
\end{bmatrix}
\cdot
\begin{bmatrix}
\alpha_1 \\ \alpha_2 \\ \vdots \\ \alpha_r
\end{bmatrix}.
$$
Since the above Vandermonde matrix is nonsingular if and only if $r = mn-m+1$ and $\xi_1,\xi_2,\dots,\xi_r$ are mutually distinct, every Hankel tensor must have such a Vandermonde decomposition with the number of items no larger than $mn-m+1$. When the Hankel tensor is further positive semi-definite, we desire that all the coefficients $\alpha_k$'s are positive, so that each item is a rank-one positive semi-definite Hankel tensor when $m$ is even. Moreover, a real square Hankel tensor $\ten{H}$ is called a \emph{complete Hankel tensor}, if the coefficients $\alpha_k$'s in one of its Vandermonde decompositions are all positive (see \cite{Qi15}).

Nevertheless, the set of all complete Hankel tensors is not ``complete''. Li et al. showed in \cite[Corollary 1]{Li14c} that the $m^{\rm th}$-order $n$-dimensional complete Hankel tensor cone is not closed and its closure is the $m^{\rm th}$-order $n$-dimensional strong Hankel tensor cone. An obvious counterexample is ${\bf e}_n^{\circ m}$, where ${\bf e}_n = [0,\dots,0,1]^\top$ is the last unit vector. Since all the Vandermonde vectors begin with a $1$ and $\alpha_k$'s being positive, the positive semi-definite Hankel tensor ${\bf e}_n^{\circ m}$ must not be a complete Hankel tensor.

Fortunately, $\alpha{\bf e}_n^{\circ m}$ is the only kind of rank-one non-complete Hankel tensors. We will shortly show that if we add it into the basis, then all the strong Hankel tensors can be decomposed into an \emph{augmented Vandermonde decomposition}
$$
\ten{H} = \sum_{k=1}^{r-1} \alpha_k {\bf v}_k^{\circ m} + \alpha_r {\bf e}_n^{\circ m}.
$$
Note that $\frac{1}{\xi^{n-1}}[1,\xi,\xi^2,\dots,\xi^{n-1}]^\top \to {\bf e}_n$ when $\xi \to \infty$. The cone of Hankel tensors with an augmented Vandermonde decomposition is actually the closure of the cone of complete Hankel tensors.
When a Hankel tensor $\ten{H}$ has such an augmented Vandermonde decomposition, its associated Hankel matrix $H$ also has a corresponding decomposition
$$
H = \sum_{k=1}^{r-1} \alpha_k \widetilde{\bf v}_k^{\circ 2} + \alpha_r {\bf e}_{(n-1)m/2+1}^{\circ 2},
$$
where $\widetilde{\bf v}_k = \big[1,\xi_k,\xi_k^2,\dots,\xi_k^{(n-1)m/2}\big]^\top$ and ${\bf v}^{\circ 2}$ is exactly ${\bf v} {\bf v}^\top$, and vice versa. Therefore, if a positive semi-definite Hankel tensor has an augmented Vandermonde decomposition with all positive coefficients, then it is also a strong Hankel tensor, that is, its associated Hankel matrix must be positive semi-definite. Furthermore, when we obtain an augmented Vandermonde decomposition of its associated Hankel matrix, we can induce an augmented Vandermonde decomposition of the original Hankel tensor straightforward. Hence, we begin with the positive semi-definite Hankel matrices.

\subsection{A General Vandermonde Decompostion}

We shall introduce the algorithm for a general Vandermonde decomposition of an arbitrary Hankel matrix proposed by Boley et al. \cite{Boley97} briefly in this subsection first, then restrict the decomposition into the real situation. Let's begin with a nonsingular Hankel matrix $H \in \mathbb{C}^{r \times r}$. After we solve the Yule-Walker equation \cite[Chapter 4.7]{Golub13}
$$
\begin{bmatrix}
h_0 & h_1 & h_2 & \cdots & h_{r-1} \\
h_1 & h_2 & h_3 & \cdots & h_r \\
\vdots & \vdots & \vdots & \iddots & \vdots \\
h_{r-2} & h_{r-1} & h_r & \cdots & h_{2r-3} \\
h_{r-1} & h_r & h_{r+1} & \cdots & h_{2r-2}
\end{bmatrix}
\cdot
\begin{bmatrix}
a_0 \\ a_1 \\ a_2 \\ \vdots \\ a_{r-1}
\end{bmatrix}
=
\begin{bmatrix}
h_r \\ h_{r+1} \\ \vdots \\ h_{2r-2} \\ \gamma
\end{bmatrix},
$$
we obtain an $r$ term recurrence for $k = r,r+1,\dots,2r-2$, i.e.,
$$
h_k = a_{r-1} h_{k-1} + a_{r-2} h_{k-2} + \dots + a_0 h_{k-r}.
$$
Denote $C$ as the companion matrix \cite[Chapter 7.4.6]{Golub13} corresponding to the polynomial $p(\lambda) = \lambda^r - a_{r-1} \lambda^{r-1} - \dots - a_0 \lambda^0$, i.e.,
$$
C = \begin{bmatrix}
0 & 1 \\
  & 0 & 1 \\
  &   & \ddots & \ddots \\
  &   &        & 0 & 1 \\
a_0 & a_1 & \cdots & a_{r-2} & a_{r-1}
\end{bmatrix}.
$$
Let the Jordan canonical form of $C$ be $C = V^\top J V^{-\top}$, where $J = {\rm diag}\{J_1,J_2,\dots,J_s\}$ and $J_l$ is the $k_l \times k_l$ Jordan block corresponding to eigenvalue $\lambda_l$. Moreover, the nonsingular matrix $V$ has the form
$$
V = \big[{\bf v}, J^\top {\bf v}, (J^\top)^2 {\bf v}, \dots, (J^\top)^{r-1} {\bf v}\big],
$$
where ${\bf v} = [{\bf e}_{k_1,1}^\top,{\bf e}_{k_2,1}^\top,\dots,{\bf e}_{k_s,1}^\top]^\top$ is a vector partitioned conformably with $J$ and ${\bf e}_{k_l,1}$ is the first $k_l$-dimensional unit coordinate vector. This kind of $V$ is often called a \emph{confluent Vandermonde matrix}. When the multiplicities of all the eigenvalues of $C$ equal one, the matrix $V$ is exactly a Vandermonde matrix.

Denote ${\bf h}_0$ as the first column of $H$ and ${\bf w} = V^{-\top} {\bf h}_0$. There exists a unique block diagonal matrix $D = {\rm diag}\{D_1,D_2,\dots,D_s\}$, which is also partitioned conformably with $J$, satisfying
$$
D {\bf v} = {\bf w} \quad\text{and}\quad D J^\top = J D.
$$
Moreover, each block $D_l$ is a $k_l$-by-$k_l$ upper anti-triangular Hankel matrix. If we partition ${\bf w} = [{\bf w}_1,{\bf w}_2,\dots,{\bf w}_s]^\top$ conformably with $J$, then the $l^{\rm th}$ block is determined by
$$
D_l = \begin{bmatrix}
({\bf w}_l)_1 & ({\bf w}_l)_2 & \cdots & ({\bf w}_l)_{k_l} \\
({\bf w}_l)_2 & \cdots & ({\bf w}_l)_{k_l} & 0 \\
\vdots & \iddots & \iddots & \vdots \\
({\bf w}_l)_{k_l} & 0 & \cdots & 0
\end{bmatrix}.
$$
Finally, we obtain a general Vandermonde decomposition of a full-rank Hankel matrix
$$
H = V^\top D V.
$$
Furthermore, if the leading $r \times r$ principal submatrix, i.e., $H(1:r,1:r)$, of an $n \times n$ rank-$r$ Hankel matrix $H$ is nonsingular, then $H$ admits the Vandermonde decomposition $H = (V_{r \times n})^\top D_{r \times r} V_{r \times n}$, which is induced by the decomposition of the leading $r \times r$ principal submatrix.

Nevertheless, this generalized Vandermonde decomposition is not enough for discussing the positive definiteness of a real Hankel matrix, since the factors $V$ and $D$ could be complex even though $H$ is a real matrix. We shall modify this decomposition into a general real Vandermonde decomposition. Assume that two eigenvalues $\lambda_1$ and $\lambda_2$ of $C$ form a pair of conjugate complex numbers. Then the corresponding parts in $D$ and $V$ are also conjugate, respectively. That is,
$$
\begin{bmatrix} V_1^\top & V_2^\top \end{bmatrix} \cdot \begin{bmatrix} D_1 & \\ & D_2 \end{bmatrix} \cdot \begin{bmatrix} V_1 \\ V_2 \end{bmatrix} =
\begin{bmatrix} V_1^\top & \bar{V}_1^\top \end{bmatrix} \cdot \begin{bmatrix} D_1 & \\ & \bar{D}_1 \end{bmatrix} \cdot \begin{bmatrix} V_1 \\ \bar{V}_1 \end{bmatrix}.
$$
Note that
$$
\begin{bmatrix} {\bf u} + {\bf v}\imath & {\bf u} - {\bf v}\imath \end{bmatrix} \cdot \begin{bmatrix} a + b\imath & \\ & a - b\imath \end{bmatrix} \cdot \begin{bmatrix} {\bf u}^\top + {\bf v}^\top\imath \\ {\bf u}^\top - {\bf v}^\top\imath \end{bmatrix} =
\begin{bmatrix} {\bf u} & {\bf v} \end{bmatrix} \cdot 2\begin{bmatrix} a & -b \\ -b & -a \end{bmatrix} \cdot \begin{bmatrix} {\bf u}^\top \\ {\bf v}^\top \end{bmatrix}.
$$
Denote the $j^{\rm th}$ column of $V_1^\top$ as ${\bf u}_j + {\bf v}_j \imath$ and the $j^{\rm th}$ entry of the first column of $D_1$ is $a_j + b_j \imath$, where $\imath = \sqrt{-1}$ and ${\bf u}_j, {\bf v}_j, a_j, b_j$ are all real. Then
\[
\begin{split}
&\begin{bmatrix} V_1^\top & V_2^\top \end{bmatrix} \cdot \begin{bmatrix} D_1 & \\ & D_2 \end{bmatrix} \cdot \begin{bmatrix} V_1 \\ V_2 \end{bmatrix} \\
&=
\begin{bmatrix} {\bf u}_1 & {\bf v}_1 & \dots & {\bf u}_{k_1} & {\bf v}_{k_1} \end{bmatrix} \cdot
\begin{bmatrix}
\Lambda_1 & \Lambda_2 & \cdots & \Lambda_{k_1} \\
\Lambda_2 & \cdots & \Lambda_{k_1} & {\bf O} \\
\vdots & \iddots & \iddots & \vdots \\
\Lambda_{k_l} & {\bf O} & \cdots & {\bf O}
\end{bmatrix} \cdot
\begin{bmatrix} {\bf u}_1^\top \\ {\bf v}_1^\top \\ \vdots \\ {\bf u}_{k_1}^\top \\ {\bf v}_{k_1}^\top \end{bmatrix},
\end{split}
\]
where the $2$-by-$2$ block $\Lambda_j$ is
$$
\Lambda_j = 2\begin{bmatrix} a_j & -b_j \\ -b_j & -a_j \end{bmatrix}.
$$
We do the same transformations to all the conjugate eigenvalue pairs, then we obtain a real decomposition of the real Hankel matrix $H = \widehat{V}^\top \widehat{D} \widehat{V}$. Here, each diagonal block of $\widehat{D}$ corresponding to a real eigenvalue of $C$ is an upper anti-triangular Hankel matrix, and each one corresponding to a pair of two conjugate eigenvalues is an upper anti-triangular block Hankel matrix with $2$-by-$2$ blocks.

We claim that if the Hankel matrix $H$ is further positive semi-definite, then all the eigenvalues of $C$ are real and of multiplicity one. This can be seen by recognizing that the following three cases of the diagonal blocks of $\widehat{D}$ cannot be positive semi-definite: (1) an anti-upper triangular Hankel block whose size is larger than $1$, (2) a $2$-by-$2$ block $\Lambda_j = 2\left[\begin{smallmatrix} a_j & -b_j \\ -b_j & -a_j \end{smallmatrix}\right]$, and (3) a block anti-upper triangular Hankel block with the blocks in case (2). Therefore, when a real rank-$r$ Hankel matrix $H$ is positive semi-definite and its leading $r \times r$ principal submatrix is positive definite, the block diagonal matrix $\widehat{D}$ in the generalized real Vandermonde decomposition must be diagonal. Hence this Hankel matrix admits a Vandermonde decomposition with $r$ terms and all positive coefficients
$$
H = \sum_{k=1}^r \alpha_k {\bf v}_k {\bf v}_k^\top, \quad \alpha_k > 0, \quad {\bf v}_k = \big[1,\xi_k,\dots,\xi_k^{n-1}\big]^\top.
$$
This result for positive definite Hankel matrices is already known (see \cite[Lemma 0.2.1]{Tyrtyshnikov94}).

\subsection{The Second Inheritance Property of Hankel Tensors}

However, the associated Hankel matrix of a Hankel tensor is not necessarily with a full-rank leading principal submatrix. Thus we shall study whether a positive semi-definite Hankel matrix can always decomposed into the form
$$
H = \sum_{k=1}^{r-1} \alpha_k {\bf v}_k {\bf v}_k^\top + \alpha_r {\bf e}_n {\bf e}_n^\top, \quad \alpha_k \geq 0.
$$
We first need a lemma about the rank-one modifications on a positive semi-definite matrix. Denote the range and the kernel of a matrix $A$ as ${\rm Ran}(A)$ and ${\rm Ker}(A)$, respectively.

\begin{lemma}\label{lemma_rank1}
Let $A$ be a positive semi-definite matrix with ${\rm rank}(A) = r$. Then there exists a unique $\alpha>0$ such that $A - \alpha {\bf u} {\bf u}^\top$ is positive semi-definite with ${\rm rank}(A - \alpha {\bf u} {\bf u}^\top) = r-1$, if and only if ${\bf u}$ is in the range of $A$.
\end{lemma}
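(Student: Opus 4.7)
The plan is to prove the two implications separately, making the hidden role of the Moore--Penrose pseudoinverse explicit and reducing the problem to a rank-one modification of the identity.

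For the forward (``only if'') direction, I would argue by contraposition. If ${\bf u} \notin {\rm Ran}(A)$, then since $A$ is symmetric positive semi-definite I can orthogonally decompose ${\bf u} = {\bf u}_1 + {\bf u}_2$ with ${\bf u}_1 \in {\rm Ran}(A)$ and $0 \neq {\bf u}_2 \in {\rm Ker}(A)$. Testing the quadratic form $A - \alpha {\bf u}{\bf u}^\top$ against ${\bf u}_2$ then yields $-\alpha \|{\bf u}_2\|^4 < 0$ (the $A$-contribution vanishes on the kernel, while ${\bf u}_2^\top {\bf u} = \|{\bf u}_2\|^2 \neq 0$ by orthogonality), so no $\alpha > 0$ can preserve positive semi-definiteness.

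For the backward (``if'') direction, the plan is to write $A = B B^\top$ with $B$ of full column rank $r$ (e.g., via the thin spectral factorization). Since ${\bf u} \in {\rm Ran}(A) = {\rm Ran}(B)$, one can write ${\bf u} = B{\bf c}$ for some ${\bf c} \neq 0$, and the question reduces to analyzing
$$
A - \alpha {\bf u}{\bf u}^\top = B\bigl(I_r - \alpha {\bf c}{\bf c}^\top\bigr)B^\top.
$$
Because $B$ has full column rank, both positive semi-definiteness and the rank of the left-hand side are dictated by the $r \times r$ middle factor, whose nonzero eigenvalues are $1$ (with multiplicity $r-1$) and $1 - \alpha \|{\bf c}\|^2$. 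Matching ``PSD'' with ``rank exactly $r-1$'' forces the unique value $\alpha = 1/\|{\bf c}\|^2$; translating back via ${\bf c} = B^\dagger {\bf u}$ gives the closed-form expression $\alpha = 1/({\bf u}^\top A^\dagger {\bf u})$, which is well defined and positive precisely because ${\bf u}$ is a nonzero vector in ${\rm Ran}(A)$.

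I do not expect a serious obstacle. The main subtlety worth flagging is the uniqueness claim: the eigenvalue picture for $I_r - \alpha {\bf c}{\bf c}^\top$ makes it transparent that a strictly smaller $\alpha$ leaves the rank at $r$, while a strictly larger $\alpha$ destroys positive semi-definiteness, so there is exactly one choice that simultaneously preserves PSD and reduces the rank by one. A purely algebraic alternative, avoiding the factorization $A = BB^\top$, would be to verify directly that ${\bf w} := A^\dagger {\bf u}$ satisfies $(A - \alpha {\bf u}{\bf u}^\top){\bf w} = 0$ at the critical value of $\alpha$, which already furnishes the required rank drop, and then to invoke a short Schur-complement argument for positive semi-definiteness.
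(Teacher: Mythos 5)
Your proof is correct, and it takes a genuinely different route from the paper's.

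For the ``if'' direction the paper works directly with the spectral decomposition of $A$: it expands ${\bf u} = \sum_i \mu_i {\bf x}_i$ in the eigenbasis, argues that the rank drop forces a unique kernel direction ${\bf p} = \sum_i (\mu_i/\lambda_i){\bf x}_i$ and the unique value $\alpha = \big(\sum_i \mu_i^2/\lambda_i\big)^{-1}$, and then verifies positive semi-definiteness by a Cauchy--Schwarz (H\"older) inequality on ${\rm Ran}(A)$. You instead factor $A = BB^\top$ with $B$ of full column rank $r$, pull the perturbation through to get $B(I_r - \alpha{\bf c}{\bf c}^\top)B^\top$, and read off everything from the eigenvalues $1$ (multiplicity $r-1$) and $1-\alpha\|{\bf c}\|^2$ of the middle factor. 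This buys you transparency: the uniqueness of $\alpha$ and the simultaneous PSD-plus-rank-drop condition become a single trivial eigenvalue statement, and no separate inequality argument is needed; the closed form $\alpha = 1/({\bf u}^\top A^\dagger {\bf u})$ is the same number as $\big(\sum_i\mu_i^2/\lambda_i\big)^{-1}$. One technical point you correctly rely on but should state when writing this up is that for full-column-rank $B$, conjugation by $B$ preserves both PSD-ness (since $B^\top$ is surjective onto $\mathbb{R}^r$) and rank. For the ``only if'' direction, the paper simply asserts it is ``obvious'' from the rank condition, whereas you give an explicit and cleaner argument: if ${\bf u}\notin{\rm Ran}(A)$, the component ${\bf u}_2\in{\rm Ker}(A)$ yields the negative test value $-\alpha\|{\bf u}_2\|^4$ and rules out PSD-ness for every $\alpha>0$. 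That is an improvement over the paper's terse justification. The only minor caveat is that both you and the paper implicitly assume ${\bf u}\neq{\bf 0}$ (otherwise no rank drop is possible); it would be worth saying so.
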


\begin{proof}
The condition ${\rm rank}(A - \alpha {\bf u} {\bf u}^\top) = {\rm rank}(A)-1$ obviously indicates that ${\bf u} \in {\rm Ran}(A)$. Thus we only need to prove the ``if'' part of the statement.

Let the nonzero eigenvalues of $A$ be $\lambda_1,\lambda_2,\dots,\lambda_r$ and the corresponding eigenvectors be ${\bf x}_1,{\bf x}_2,\dots,{\bf x}_r$, respectively. Since ${\bf u} \in {\rm Ran}(A)$, we can write ${\bf u} = \mu_1 {\bf x}_1 + \mu_2 {\bf x}_2 + \dots + \mu_r {\bf x}_r$. Note that ${\rm rank}(A - \alpha {\bf u} {\bf u}^\top) = {\rm rank}(A)-1$ also implies ${\rm dim}\,{\rm Ker}(A - \alpha {\bf u} {\bf u}^\top) = {\rm dim}\,{\rm Ker}(A)+1$, which is equivalent to that there exists a unique subspace ${\rm span}\{\bf p\}$ such that $A {\bf p} = \alpha {\bf u} ({\bf u}^\top {\bf p}) \neq {\bf 0}$. Write ${\bf p} = \eta_1 {\bf x}_1 + \eta_2 {\bf x}_2 + \dots + \eta_r {\bf x}_r$. Then there exists a unique linear combination and a unique scalar $\alpha$ satisfying the requirement, i.e.,
$$
\eta_i = \mu_i/\lambda_i\ (i=1,2,\dots,r) \quad\text{and}\quad \alpha = \big(\mu_1^2/\lambda_1 + \dots + \mu_r^2/\lambda_r\big)^{-1}.
$$

Then we need to verify the positive semi-definiteness of $A - \alpha {\bf u} {\bf u}^\top$. For an arbitrary vector ${\bf x} = \xi_1 {\bf x}_1 + \xi_2 {\bf x}_2 + \dots + \xi_r {\bf x}_r$ in the range of $A$, we have
$$
{\bf x}^\top A {\bf x} = \xi_1^2 \lambda_1 + \dots + \xi_r^2 \lambda_r
\quad\text{and}\quad
{\bf u}^\top {\bf x} = \mu_1 \xi_1 + \dots + \mu_r \xi_r.
$$
Then, along with the expression of $\alpha$, the H\"{o}lder inequality indicates that ${\bf x}^\top A {\bf x} \geq \alpha ({\bf u}^\top {\bf x})^2$, i.e., the rank-$(r-1)$ matrix $A - \alpha {\bf u} {\bf u}^\top$ is also positive semi-definite.  \qed
\end{proof}

The following theorem tells that the leading $(r-1) \times (r-1)$ principal submatrix of a rank-$r$ positive semi-definite Hankel matrix is always full-rank, even when the leading $r \times r$ principal submatrix is rank deficient.

\begin{theorem}
Let $H \in \mathbb{R}^{n \times n}$ be a positive semi-definite Hankel matrix with ${\rm rank}(H) = r$. If the last column $H(:,n)$ is linearly dependent with the first $n-1$ columns $H(:,1:n-1)$, then the leading $r \times r$ principal submatrix $H(1:r,1:r)$ is positive definite. If $H(:,n)$ is linearly independent with $H(:,1:n-1)$, then the leading $(r-1) \times (r-1)$ principal submatrix $H(1:r-1,1:r-1)$ is positive definite.
\end{theorem}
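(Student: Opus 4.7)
The plan is to exploit the factorization $H = LL^\top$ with $L \in \mathbb{R}^{n \times r}$ of full column rank. For any index set $I \subseteq \{1, \ldots, n\}$ of size $r$, the principal submatrix $H(I, I) = L(I, :) L(I, :)^\top$ is positive definite if and only if $L(I, :)$ is invertible, which (since $L$ acts injectively on $\mathbb{R}^r$) is equivalent to the columns $\{H(:, i)\}_{i \in I}$ being linearly independent. Both parts of the theorem therefore reduce to showing that an appropriate leading block of columns of $H$ is linearly independent.

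For the first case, write $H(:, n) = \sum_{j=1}^{n-1} d_j H(:, j)$, which yields a length-$(n-1)$ linear recurrence $h_t = \sum_j d_j h_{t-n+j}$ valid on $t \in \{n-1, \ldots, 2n-2\}$. I would use this recurrence to extend the generating sequence to an infinite sequence $(h_t)_{t \geq 0}$ and form the infinite Hankel matrix $\mathcal{H} = (h_{i+j-2})_{i, j \geq 1}$; the recurrence forces every column of $\mathcal{H}$ beyond the first $n-1$ to lie in the span of the first $n-1$. Assume for contradiction that $\sum_{j=1}^s y_j H(:, j) = 0$ with $y_s \neq 0$ and $s \leq r$. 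An induction on $i$ then extends the dependence $\sum_j y_j h_{i+j-2} = 0$ from $i \in \{1, \ldots, n\}$ to all $i \geq 1$: the inductive step substitutes the Case 1 recurrence into $h_{(I+1)+j-2}$ and then re-uses the dependence at smaller indices. Once this is done, $(h_t)$ satisfies a recurrence of order $s - 1 \leq r - 1$ for every $t \geq s-1$, so columns $s, s+1, \ldots$ of $\mathcal{H}$ lie in the span of the first $s-1$, giving $\mathrm{rank}(\mathcal{H}) \leq s - 1$. Since $H$ is a finite submatrix of $\mathcal{H}$, this yields $\mathrm{rank}(H) \leq s - 1 < r$, contradicting $\mathrm{rank}(H) = r$.

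For the second case, I would reduce to the first through a rank-one Hankel correction. Writing $\ell_i \in \mathbb{R}^r$ for the $i$-th row of $L$ and setting $V = \mathrm{span}\{\ell_1, \ldots, \ell_{n-1}\}$, the Case 2 hypothesis forces $\ell_n \notin V$, so $\dim V = r - 1$; let $\pi_V$ denote orthogonal projection onto $V$ and put $\alpha = \|\ell_n - \pi_V(\ell_n)\|^2 > 0$. Replacing $\ell_n$ by $\pi_V(\ell_n)$ in $L$ gives $\tilde L$, and a direct check shows $\tilde H := \tilde L \tilde L^\top = H - \alpha \mathbf{e}_n \mathbf{e}_n^\top$, so $\tilde H$ differs from $H$ only in its $(n,n)$ entry. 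Consequently $\tilde H$ is a PSD Hankel matrix with generating vector $(h_0, \ldots, h_{2n-3}, h_{2n-2} - \alpha)$; it has rank $r - 1$ (its rows all lie in, and together span, $V$); and writing $\pi_V(\ell_n) = \sum_j c_j \ell_j$ gives $\tilde H(:, n) = \sum_j c_j \tilde H(:, j)$, so $\tilde H$ satisfies Case 1. Applying Case 1 to $\tilde H$ yields that $\tilde H(1:r-1, 1:r-1)$ is positive definite, and since the modification of entry $(n,n)$ lies outside this block (as $r - 1 < n$), it coincides with $H(1:r-1, 1:r-1)$.

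The main obstacle is the inductive extension in Case 1 that upgrades the dependence from rows $1, \ldots, n$ to all rows $1, 2, \ldots$; this is where the Case 1 hypothesis enters decisively, and the index bookkeeping must be done carefully so that each invocation of the Case 1 recurrence stays within the range $\{n-1, \ldots, 2n-2\}$ on which it is originally guaranteed. Once this extension is established, the remainder of the argument follows from elementary observations about linear recurrences and the factorization $H = LL^\top$.
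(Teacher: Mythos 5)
Your proof is correct and takes a genuinely different route from the paper's in its treatment of Case~1. The paper proceeds by induction on the matrix size $n$: in Case~1 it applies the inductive hypothesis to the $(n-1)\times(n-1)$ leading submatrix and then rules out one of the two inductive alternatives by invoking the generalized Vandermonde decomposition machinery developed earlier in that section, while Case~2 is reduced to Case~1 via a separate rank-one modification lemma. You dispense with the size induction entirely. Your Case~1 is a direct, self-contained argument: you propagate the hypothetical dependence $\sum_{j=1}^{s} y_j H(:,j)=0$ from the first $n$ rows to every row of the infinite Hankel extension by induction on the row index $T$, and I verified the bookkeeping works — for $T\ge n$ the Case~1 recurrence expresses $h_{T+j-1}$ in terms of $h_{(T-n+k)+j-1}$ with $k=1,\dots,n-1$, and all the shifted indices $T-n+k$ fall in $\{1,\dots,T-1\}$ where the inductive hypothesis applies; the resulting order-$(s-1)$ recurrence forces $\mathrm{rank}(H)\le s-1<r$, a contradiction. (It is worth stating explicitly that the Case~1 hypothesis forces $r\le n-1$, so $s\le n-1$ and the initial indices $n,\dots,n+s-1$ stay where the extended sequence is defined.) Your Case~2 is morally the same rank-one Hankel correction as the paper's, but carried out concretely through the Gram factorization $H=LL^\top$ and orthogonal projection onto $V=\mathrm{span}\{\ell_1,\dots,\ell_{n-1}\}$, which makes the correction $\alpha$, the Hankel structure of $\tilde H$, its rank $r-1$, and the Case~1 hypothesis for $\tilde H$ all immediate — effectively subsuming the paper's Lemma on rank-one modifications. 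Net effect: your argument is more elementary and does not depend on the Vandermonde decomposition theory that the paper's proof leans on, at the cost of introducing the infinite Hankel extension as an auxiliary device.
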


\begin{proof}
We apply the mathematical induction on the size $n$. First, the statement is apparently true for $2 \times 2$ positive semi-definite Hankel matrices. Assume that the statement holds for $(n-1) \times (n-1)$ Hankel matrices, then we consider the $n \times n$ case.

Case 1: When the last column $H(:,n)$ is linearly dependent with the first $n-1$ columns $H(:,1:n-1)$, the submatrix $H(1:n-1,1:n-1)$ is also a rank-$r$ positive semi-definite Hankel matrix. Then from the induction hypothesis, $H(1:r,1:r)$ is full rank if $H(1:n-1,n-1)$ is linearly dependent with $H(1:n-1,1:n-2)$, and $H(1:r-1,1:r-1)$ is full rank otherwise. We shall show that the column $H(1:n-1,n-1)$ is always linearly dependent with $H(1:n-1,1:n-2)$.

Otherwise, the leading $(r-1) \times (r-1)$ principal submatrix $H(1:r-1,1:r-1)$ is positive definite, and the rank of $H(1:n-2,1:n-1)$ is $r-1$. Since the column $H(:,n)$ is linear dependent with the previous $(n-1)$ columns, the rank of $H(1:n-2,:)$ is also $r-1$. Thus the rectangular Hankel matrix $H(1:n-2,:)$ has a Vandermonde decomposition
$$
H(1:n-2,:) = \sum_{k=1}^{r-1} \alpha_k \begin{bmatrix} 1 \\ \xi_k \\ \vdots \\ \xi_k^{n-3} \end{bmatrix} \begin{bmatrix} 1 & \xi_k & \cdots & \xi_k^{n-2} & \xi_k^{n-1} \end{bmatrix}.
$$
Since $H(n-1,n-1) = H(n-2,n)$, the square Hankel matrix $H(1:n-1,1:n-1)$ has a corresponding decomposition
$$
H(1:n-1,1:n-1) = \sum_{k=1}^{r-1} \alpha_k \begin{bmatrix} 1 \\ \xi_k \\ \vdots \\ \xi_k^{n-2} \end{bmatrix} \begin{bmatrix} 1 & \xi_k & \cdots & \xi_k^{n-2} \end{bmatrix}.
$$
This contradicts with ${\rm rank}\big(H(1:n-1,1:n-1)\big) = r$. Therefore, $H(1:n-1,n-1)$ must be linearly dependent with $H(1:n-1,1:n-2)$. Hence, the leading principal submatrix $H(1:r,1:r)$ is positive definite.

Case 2: When the last column $H(:,n)$ is linearly independent with the first $n-1$ columns $H(:,1:n-1)$, it is equivalent to that ${\bf e}_n$ is in the range of $H$. Thus, from Lemma \ref{lemma_rank1}, there exists a scalar $\alpha_r$ such that $H - \alpha_r {\bf e}_n {\bf e}_n^\top$ is rank-$(r-1)$ and also positive semi-definite. Referring back to Case 1, we know that the leading principal submatrix $H(1:r-1,1:r-1)$ is positive definite.    \qed
\end{proof}

Following the above theorem, when $H(:,n)$ is linearly dependent with $H(:,1:n-1)$, the leading $r \times r$ principal submatrix $H(1:r,1:r)$ is positive definite. Thus $H$ has a Vandermonde decomposition with all positive coefficients
$$
H = \sum_{k=1}^r \alpha_k {\bf v}_k {\bf v}_k^\top, \quad \alpha_k > 0, \quad {\bf v}_k = \big[1,\xi_k,\dots,\xi_k^{n-1}\big]^\top.
$$
When $H(:,n)$ is linearly independent with $H(:,1:n-1)$, the leading $(r-1) \times (r-1)$ principal submatrix $H(1:r-1,1:r-1)$ is positive definite. Thus $H$ has an augmented Vandermonde decomposition with all positive coefficients
$$
H = \sum_{k=1}^{r-1} \alpha_k {\bf v}_k {\bf v}_k^\top + \alpha_r {\bf e}_n {\bf e}_n^\top, \quad \alpha_k > 0, \quad {\bf v}_k = \big[1,\xi_k,\dots,\xi_k^{n-1}\big]^\top.
$$
Combining the definition of strong Hankel tensors and the analysis at the end of Section 3.1, we arrive at the following theorem.

\begin{theorem}\label{thm_strong}
Let $\ten{H}$ be an $m^{\rm th}$-order $n$-dimensional Hankel tensor and the rank of its associated Hankel matrix be $r$. Then it is a strong Hankel tensor if and only if it admits a Vandermonde decomposition with all positive coefficients
\begin{equation}
\ten{H} = \sum_{k=1}^r \alpha_k {\bf v}_k^{\circ m}, \quad \alpha_k > 0, \quad {\bf v}_k = \big[1,\xi_k,\dots,\xi_k^{n-1}\big]^\top,
\end{equation}
or an augmented Vandermonde decomposition with all positive coefficients
\begin{equation}\label{eq_AVD}
\ten{H} = \sum_{k=1}^{r-1} \alpha_k {\bf v}_k^{\circ m} + \alpha_r {\bf e}_n^{\circ m}, \quad \alpha_k > 0, \quad {\bf v}_k = \big[1,\xi_k,\dots,\xi_k^{n-1}\big]^\top.
\end{equation}
\end{theorem}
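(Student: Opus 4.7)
The plan is to prove the two directions separately, with the easy direction going from the tensor decomposition to a matrix decomposition, and the hard direction going the other way by invoking the preceding theorem on the leading principal submatrix of a positive semi-definite Hankel matrix.

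For the ``if'' direction, I would start from either of the displayed decompositions of $\ten{H}$ and read off the generating vector ${\bf h}$: in the Vandermonde case, ${\bf h} = \sum_{k=1}^r \alpha_k \bigl[1,\xi_k,\xi_k^2,\dots,\xi_k^{mn-m}\bigr]^\top$; in the augmented case, the last term adds $\alpha_r$ in the final entry and zeros elsewhere. Feeding this same ${\bf h}$ into the associated Hankel matrix yields, exactly as noted after \eqref{eq_AVD} is foreshadowed in Section 3.1, the matrix decompositions $H = \sum_{k=1}^r \alpha_k \widetilde{\bf v}_k \widetilde{\bf v}_k^\top$ or $H = \sum_{k=1}^{r-1} \alpha_k \widetilde{\bf v}_k \widetilde{\bf v}_k^\top + \alpha_r {\bf e}_{(n-1)m/2+1} {\bf e}_{(n-1)m/2+1}^\top$, where $\widetilde{\bf v}_k = [1,\xi_k,\dots,\xi_k^{(n-1)m/2}]^\top$. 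Since every $\alpha_k$ is positive, $H$ is a sum of positive semi-definite rank-one matrices, hence positive semi-definite, and $\ten{H}$ is strong by definition.

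For the ``only if'' direction, I would assume $\ten{H}$ is strong, so $H$ is positive semi-definite of some rank $r$, and split into the two cases handed to us by the previous theorem. In Case 1, where the last column of $H$ is linearly dependent on the earlier columns, the leading $r\times r$ principal submatrix $H(1:r,1:r)$ is positive definite. Applying the classical Vandermonde decomposition result for positive definite Hankel matrices (\cite[Lemma 0.2.1]{Tyrtyshnikov94}, as discussed at the end of Section 3.2) furnishes $H(1:r,1:r) = \sum_{k=1}^r \alpha_k {\bf v}_k^{(r)} ({\bf v}_k^{(r)})^\top$ with $\alpha_k>0$ and mutually distinct nodes $\xi_k$; the recursion forced by the column relation then extends this to $H$ itself, since the $h_j$'s for larger indices are uniquely determined by the Vandermonde structure of ${\bf h}$. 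In Case 2, where the last column of $H$ is linearly independent of the rest, ${\bf e}_{(n-1)m/2+1}$ lies in ${\rm Ran}(H)$, so Lemma \ref{lemma_rank1} produces a unique $\alpha_r>0$ such that $H - \alpha_r {\bf e}\,{\bf e}^\top$ is positive semi-definite of rank $r-1$; applying Case 1 to this reduced matrix gives the remaining $r-1$ Vandermonde terms.

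Finally, I would lift the matrix decomposition back to the tensor by the same generating-vector bookkeeping used in the ``if'' direction, now run in reverse: the coefficients and nodes extracted from $H$ determine ${\bf h}$ as a positive combination of Vandermonde vectors (possibly plus a multiple of the last unit vector), and this forces $\ten{H}$ to inherit the analogous $m^{\rm th}$-order outer-product decomposition \eqref{eq_AVD}. The main obstacle, and the step that deserves the most care, is the bookkeeping in Case 1 of the reverse direction: one must check that the Vandermonde decomposition of the $r\times r$ principal submatrix really propagates to all of $H$ (and hence to the full generating vector of length $mn-m+1$), which relies essentially on the fact that Case 1 is precisely the case in which $h_r, h_{r+1},\dots,h_{mn-m}$ satisfy the same linear recurrence driven by the companion matrix whose eigenvalues are the $\xi_k$. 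Once that is in place, the translation between the matrix-level and tensor-level Vandermonde expansions is purely notational.
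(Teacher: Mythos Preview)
Your proposal is correct and follows essentially the same route as the paper: the ``if'' direction is handled by reading off the generating vector and passing to the associated Hankel matrix (the analysis at the end of Section~3.1), while the ``only if'' direction splits into the same two cases via the preceding theorem on leading principal submatrices, invokes Lemma~\ref{lemma_rank1} in Case~2, and lifts back to the tensor through the shared generating vector. The propagation step you flag as the main obstacle is exactly what the paper covers in Section~3.2 (the rank-$r$ Hankel matrix with nonsingular leading $r\times r$ block inherits the Vandermonde factorization $H=(V_{r\times n})^\top D_{r\times r}V_{r\times n}$ via the companion-matrix recurrence), so your concern is well placed and already addressed by that machinery.
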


After we obtain the above theorem, the strong Hankel tensor cone is actually understood thoroughly. The polynomials induced by strong Hankel tensors are not only positive semi-definite and sum-of-squares, as proved in \cite[Theorem 3.1]{Qi15} and \cite[Corollary 2]{Li14c}, but also sum-of-$m^{\rm th}$-powers indeed. The detailed algorithm for computing an augmented Vandermonde decomposition of a strong Hankel tensor is displayed as follows.

\begin{algorithm}[htbp]
\caption{ Augmented Vandermonde decomposition of a strong Hankel tensor.}
\label{alg_vander}
\begin{algorithmic}[1]
\Require
The generating vector ${\bf h}$ of a strong Hankel tensor;
\Ensure
Coefficients $\alpha_k$; Poles $\xi_k$;
\State Compute the Takagi factorization of the Hankel matrix $H$ generated by ${\bf h}$: $H = U D U^\top$;
\State Recognize the rank $r$ of $H$ and whether ${\bf e}_n$ is in the range of $U$;
\State {\bf If} ${\bf e}_n \notin {\rm Ran}(U)$, {\bf then}
\State \hspace{12pt} {\bf If} $r < n$, {\bf then}
\State \hspace{12pt}\hspace{12pt} ${\bf a} = U(1:r,1:r)^{-\top} D(1:r,1:r)^{-1} U(1:r,1:r)^{-1} {\bf h}(r:2r-1)$;
\State \hspace{12pt} {\bf ElseIf} $r = n$, {\bf then}
\State \hspace{12pt}\hspace{12pt} ${\bf a} = U(1:r,1:r)^{-\top} D(1:r,1:r)^{-1} U(1:r,1:r)^{-1} [{\bf h}(r:2r-2)^\top,\gamma]^\top$, where $\gamma$ is arbitrary;
\State \hspace{12pt} {\bf EndIf}
\State \hspace{12pt} Compute the roots $\xi_1,\xi_2,\dots,\xi_r$ of the polynomial $p(\xi) = \xi^r - a_{r-1} \xi^{r-1} - \dots - a_0 \xi^0$;
\State \hspace{12pt} Solve the Vandermonde system ${\rm vander}(\xi_1,\xi_2,\dots,\xi_r) \cdot [\alpha_1,\alpha_2,\dots,\alpha_r]^\top = {\bf h}(0:r-1)$;
\State {\bf Else}
\State \hspace{12pt} $\xi_r = {\rm Inf}$;
\State \hspace{12pt} $\alpha_r  = \big(\sum_{j=1}^r U(n,j)^2 / D(j,j)\big)^{-1}$;
\State \hspace{12pt} Apply Algorithm \ref{alg_vander} for the strong Hankel tensor generated by $[h_0,\dots,h_{mn-m-1},h_{mn-m}-\alpha_r]$ to compute $\alpha_k$ and $\xi_k$ for $k=1,2,\dots,r-1$;
\State {\bf EndIf} \\
\Return $\alpha_k$, $\xi_k$ for $k=1,2,\dots,r$;
\end{algorithmic}
\end{algorithm}

Employing the augmented Vandermonde decomposition, we can prove the following theorem.

\begin{theorem}
  If a Hankel matrix has no negative (or non-positive, or positive, or nonnegative) eigenvalues, then all of its associated higher-order Hankel tensors have no negative (or non-positive, or positive, or nonnegative, respectively) H-eigenvalues.
\end{theorem}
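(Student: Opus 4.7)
The plan is to leverage the augmented Vandermonde decomposition from Theorem \ref{thm_strong} as the central tool, after first collapsing the four cases into two. Negating the generating vector sends $H\to -H$ and $\ten{H}\to-\ten{H}$; moreover, $-\ten{H}$ has H-eigenvalue $-\lambda$ whenever $\ten{H}$ has H-eigenvalue $\lambda$ (since $(-\ten{H}){\bf x}^{m-1}=-\lambda{\bf x}^{[m-1]}$). Hence the ``no positive H-eigenvalue'' conclusion (resp.\ ``no nonnegative H-eigenvalue'') follows by applying the ``no negative H-eigenvalue'' conclusion (resp.\ ``no non-positive H-eigenvalue'') to $-\ten{H}$. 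So I only need to establish: (i) $H$ positive semi-definite $\Rightarrow$ $\ten{H}$ has no negative H-eigenvalue; (ii) $H$ positive definite $\Rightarrow$ $\ten{H}$ has no non-positive H-eigenvalue.

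For (i), Theorem \ref{thm_strong} gives $\ten{H}=\sum_{k=1}^{r-1}\alpha_k{\bf v}_k^{\circ m}+\alpha_r{\bf e}_n^{\circ m}$ (or the non-augmented form) with all $\alpha_k>0$ and ${\bf v}_k=[1,\xi_k,\dots,\xi_k^{n-1}]^\top$. Fix an H-eigenpair $(\lambda,{\bf x})$. When $m$ is even, contracting once more with ${\bf x}$ yields
$$
\lambda\sum_i x_i^m=\ten{H}{\bf x}^m=\sum_k\alpha_k({\bf v}_k^\top{\bf x})^m+\alpha_r x_n^m\ge 0,
$$
and since $\sum_i x_i^m>0$ for ${\bf x}\ne 0$, we get $\lambda\ge 0$. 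When $m$ is odd I use the first coordinate of $\ten{H}{\bf x}^{m-1}=\lambda{\bf x}^{[m-1]}$: because $v_{k,1}=1$ and $({\bf e}_n)_1=0$,
$$
\lambda x_1^{m-1}=\sum_k\alpha_k({\bf v}_k^\top{\bf x})^{m-1}\ge 0,
$$
where the inequality uses that $m-1$ is even. If $x_1\ne 0$ then $x_1^{m-1}>0$ and $\lambda\ge 0$. If $x_1=0$, each non-negative term must vanish, forcing ${\bf v}_k^\top{\bf x}=0$ for every $k$; substituting back gives $(\ten{H}{\bf x}^{m-1})_i=\alpha_r\delta_{i,n}x_n^{m-1}$, and a short sub-case split on whether some $x_i$ with $i\ne n$ is nonzero or ${\bf x}$ is a multiple of ${\bf e}_n$ shows $\lambda\in\{0,\alpha_r\}\subseteq[0,\infty)$.

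For (ii), I combine (i) with the extra geometric information from positive definiteness: the rank $r$ of $H$ equals its size $s=m(n-1)/2+1$, which satisfies $s\ge n$ (with equality iff $m=2$). To rule out $\lambda=0$, I repeat the analysis of (i): if $\lambda=0$ then either directly from $\ten{H}{\bf x}^m=0$ (even $m$) or from the $x_1=0$ branch (odd $m$) I obtain ${\bf v}_k^\top{\bf x}=0$ for all $k$, together with $x_n=0$ in the augmented case. Because $r\ge n$ distinct Vandermonde vectors in $\mathbb{R}^n$ already span $\mathbb{R}^n$, and in the borderline situation $r-1=n-1$ the family $\{{\bf v}_1,\dots,{\bf v}_{n-1},{\bf e}_n\}$ is still linearly independent (expand the determinant along the ${\bf e}_n^\top$ row to get a nonzero $(n-1)\times(n-1)$ Vandermonde determinant), we conclude ${\bf x}={\bf 0}$, a contradiction. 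Therefore $\lambda>0$.

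The main obstacle is the odd-order case of (i): $\ten{H}{\bf x}^m$ is not automatically non-negative, so one cannot argue by contracting with ${\bf x}$ alone, and the coordinate-wise approach needs careful treatment of the degenerate situations ($x_1=0$, ${\bf x}$ proportional to ${\bf e}_n$, or an AVD with a single Vandermonde term whose pole is $0$). The rank-counting step in (ii) is then a comparatively routine technicality.
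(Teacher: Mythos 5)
Your proof is correct, and its backbone is the same as the paper's: reduce to the positive (semi-)definite case, invoke the augmented Vandermonde decomposition of Theorem~\ref{thm_strong}, and for odd $m$ read off the sign of $\lambda$ from the first coordinate of $\ten{H}{\bf x}^{m-1}$, splitting on whether $x_1 = 0$. That matches the paper's treatment (including your case split showing $\lambda \in \{0,\alpha_r\}$ when $x_1=0$, which is a cleaner version of the paper's terse ``it is apparent that ${\bf x}={\bf e}_n$''). The differences are worth noting. First, for even $m$ the paper simply cites Theorem~\ref{thm_inherit}, whereas you re-derive the conclusion directly from the decomposition by contracting once more with ${\bf x}$; both are fine, yours is self-contained. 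Second — and this is the genuine addition — the paper disposes of the strict cases (positive definite $\Rightarrow$ no non-positive H-eigenvalue, etc.) with the phrase ``the other cases can be proved similarly,'' but the first-coordinate argument alone does not rule out $\lambda=0$: one needs to observe that when $H$ is positive definite its rank is $s = m(n-1)/2 + 1 \ge n$, so that the ${\bf v}_k$'s (together with ${\bf e}_n$ in the borderline case $r-1=n-1$, which in fact occurs only when $m=2$) span $\mathbb{R}^n$, forcing ${\bf x}={\bf 0}$ whenever all $\langle{\bf v}_k,{\bf x}\rangle$ and $x_n$ vanish. You supply exactly this missing rank-counting step, and your determinant expansion justifying independence of $\{{\bf v}_1,\dots,{\bf v}_{n-1},{\bf e}_n\}$ is correct. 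Your up-front negation trick collapsing the four cases to two is also valid and keeps the write-up tidy.
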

\begin{proof}
  This statement for even order case is a direct corollary of Theorem \ref{thm_inherit}.

  Suppose that the Hankel matrix has no negative eigenvalues.   When the order $m$ is odd, decompose an $m^{\rm th}$-order strong Hankel tensor $\ten{H}$ into $\ten{H} = \sum_{k=1}^{r-1} \alpha_k {\bf v}_k^{\circ m} + \alpha_r {\bf e}_n^{\circ m}$ with $\alpha_k \geq 0$ ($k=1,2,\dots,r$). Then for an arbitrary vector ${\bf x}$, the first entry of $\ten{H}{\bf x}^{m-1}$ is
  $$
  (\ten{H}{\bf x}^{m-1})_1 = \sum_{k=1}^{r-1} ({\bf v}_k)_1 \cdot \alpha_k ({\bf v}_k^\top {\bf x})^{m-1} = \sum_{k=1}^{r-1} \alpha_k ({\bf v}_k^\top {\bf x})^{m-1} \geq 0.
  $$
  If $\ten{H}$ has no H-eigenvalues, then it is done.   Assume it has at least one H-eigenvalue.
  Let $\lambda$ be an H-eigenvalue of $\ten{H}$ and ${\bf x}$ be a corresponding H-eigenvector. Then when $x_1 \neq 0$, from the definition we have
  $$
  \lambda = (\ten{H}{\bf x}^{m-1})_1/x_1^{m-1} \geq 0,
  $$
  since $m$ is an odd number. When $x_1 = 0$, we know that $(\ten{H}{\bf x}^{m-1})_1$ must also be zero, thus all the item $\alpha_k ({\bf v}_k^\top {\bf x})^{m-1} = 0$ for $k=1,2,\dots,r-1$. So the tensor-vector product
  $$
  \ten{H} {\bf x}^{m-1} = \sum_{k=1}^{r-1} {\bf v}_k \cdot \alpha_k ({\bf v}_k^\top {\bf x})^{m-1} + {\bf e}_n \cdot \alpha_r x_n^{m-1} = {\bf e}_n \cdot \alpha_r x_n^{m-1},
  $$
  and it is apparent that ${\bf x} = {\bf e}_n$ and $\lambda = \alpha_r x_n^{m-1}$, where the H-eigenvalue $\lambda$ is also nonnegative.    The other cases can be proved similarly.   \qed
\end{proof}

When the Hankel matrix has no negative eigenvalue, it is positive semi-definite, i.e., the associated Hankel tensors are strong Hankel tensors, which may be of either even or odd order.   Thus, we have the following corollary.

\begin{corollary}
Strong Hankel tensors have no negative H-eigenvalues.
\end{corollary}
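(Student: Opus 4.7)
The plan is to derive the corollary as an immediate consequence of the preceding theorem. First, I would unpack the definition: by the definition recalled at the start of Section 3.1, a strong Hankel tensor is precisely one whose associated Hankel matrix $H$ is positive semi-definite. Positive semi-definiteness of a symmetric real matrix is equivalent to having no negative eigenvalues, so the hypothesis of the preceding theorem (Hankel matrix with no negative eigenvalues) is automatically satisfied.

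Next, I would simply invoke the preceding theorem in the ``no negative eigenvalues'' branch. That theorem asserts that if the Hankel matrix has no negative eigenvalues, then every associated higher-order Hankel tensor has no negative H-eigenvalues. Since the strong Hankel tensor $\ten{H}$ is exactly such an associated higher-order Hankel tensor (regardless of whether its order is even or odd), the conclusion follows.

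There is no real obstacle here; the corollary is essentially a translation of terminology. The substantive content sits in the preceding theorem, whose proof already handled both the even-order case (via Theorem \ref{thm_inherit}) and the odd-order case (via the augmented Vandermonde decomposition from Theorem \ref{thm_strong}, which ensures all coefficients $\alpha_k$ are nonnegative and lets one read off the sign of $(\ten{H}{\bf x}^{m-1})_1$). Thus my write-up would consist of two sentences: one identifying ``strong Hankel tensor'' with ``associated Hankel matrix having no negative eigenvalues,'' and one applying the previous theorem. I would not introduce any new lemmas or computations.
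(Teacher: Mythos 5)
Your proposal is correct and matches the paper's reasoning exactly: the paper also notes that having no negative eigenvalues is equivalent to positive semi-definiteness of the associated Hankel matrix (the defining property of a strong Hankel tensor), and then invokes the preceding theorem. Nothing to add.
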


We also have a quantified version of the second inheritance property.

\begin{theorem}
  Let $\ten{H}$ be an $m^{\rm th}$-order $n$-dimensional Hankel tensor, and $H$ be its associated Hankel matrix. If $H$ is positive semi-definite, then
  $$
  \lambda_{\min}(\ten{H}) \geq c \cdot \lambda_{\min}(H),
  $$
  where $c$ is a positive constant depending on $m$ and $n$, $c = \min_{{\bf y}\in\mathbb{R}^{n}} {\|{\bf y}^{\ast \frac{m}{2}}\|_{2}^{2}}/{\|{\bf y}\|_{m}^{m}}$ if $m$ is even, and
  $c = \min_{{\bf y}\in\mathbb{R}^{n}} {\|{\bf y}^{\ast \frac{m-1}{2}}\|_2^2}/{\|{\bf y}\|_{m-1}^{m-1}}$ if $m$ is odd.
  If $H$ is negative semi-definite, then
  $$
  \lambda_{\max}(\ten{H}) \geq c \cdot \lambda_{\max}(H).
  $$
\end{theorem}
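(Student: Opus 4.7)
The plan is to split by the parity of $m$. For even $m$, the associated Hankel matrix $H$ is literally the second-order Hankel tensor with the same generating vector, so the claim reduces to Theorem \ref{thm_quant}: take ``$m$'' there to be $2$ and ``$q$'' there to be $m/2$, so that $\ten{H}_{qm}$ of that theorem plays the role of our $\ten{H}$ and $\ten{H}_{m}$ plays the role of $H$. Since $H$ is positive semi-definite, Theorem \ref{thm_inherit} forces $\ten{H}$ to be positive semi-definite as well, so the PSD branch of Theorem \ref{thm_quant} fires and the constant $c_{1}$ it produces equals exactly $\min_{{\bf y}\in\mathbb{R}^{n}}\|{\bf y}^{\ast m/2}\|_{2}^{2}/\|{\bf y}\|_{m}^{m}$, which is the claimed $c$.

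The odd case is the substantive one. I would invoke the augmented Vandermonde decompositions guaranteed by Theorem \ref{thm_strong},
\[
\ten{H}=\sum_{k=1}^{r-1}\alpha_{k}{\bf v}_{k}^{\circ m}+\alpha_{r}{\bf e}_{n}^{\circ m},\qquad H=\sum_{k=1}^{r-1}\alpha_{k}\widetilde{\bf v}_{k}\widetilde{\bf v}_{k}^{\top}+\alpha_{r}{\bf e}_{K}{\bf e}_{K}^{\top},
\]
with all $\alpha_{k}\geq 0$. Given an H-eigenpair $(\lambda,{\bf x})$ of $\ten{H}$, the first coordinate of the H-eigenequation reads
\[
\sum_{k=1}^{r-1}\alpha_{k}({\bf v}_{k}^{\top}{\bf x})^{m-1}=\lambda\, x_{0}^{m-1},
\]
and every term on the left is non-negative, since $m-1$ is even and $\alpha_{k}\geq 0$. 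The crucial observation is that, setting $s=(m-1)/2$ and ${\bf u}={\bf x}^{\ast s}$ padded with zeros to length $K$, the polynomial-evaluation identity $\widetilde{\bf v}_{k}^{\top}{\bf u}=p_{{\bf x}}(\xi_{k})^{s}=({\bf v}_{k}^{\top}{\bf x})^{s}$ together with the vanishing of the padded last entry of ${\bf u}$ yields ${\bf u}^{\top}H{\bf u}=\sum_{k}\alpha_{k}({\bf v}_{k}^{\top}{\bf x})^{m-1}$, so
\[
\lambda\, x_{0}^{m-1}={\bf u}^{\top}H{\bf u}\;\geq\;\lambda_{\min}(H)\,\|{\bf x}^{\ast s}\|_{2}^{2}.
\]

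If $x_{0}\neq 0$, the estimate $x_{0}^{m-1}\leq\|{\bf x}\|_{m-1}^{m-1}$ (all summands non-negative since $m-1$ is even), combined with $\lambda_{\min}(H)\geq 0$, gives $\lambda\geq\lambda_{\min}(H)\cdot\|{\bf x}^{\ast s}\|_{2}^{2}/\|{\bf x}\|_{m-1}^{m-1}\geq c\,\lambda_{\min}(H)$. The residual case $x_{0}=0$ is handled by observing that ${\bf x}\neq {\bf 0}$ still forces ${\bf u}={\bf x}^{\ast s}\neq {\bf 0}$ (the square/power of a nonzero polynomial is nonzero), whereas ${\bf u}^{\top}H{\bf u}=0$; this forces $\lambda_{\min}(H)=0$, and the previous (non-quantified) second inheritance theorem already gives $\lambda\geq 0=c\,\lambda_{\min}(H)$. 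The negative-semi-definite half of the statement I would then deduce by applying what has just been proved to $-\ten{H}$ and $-H$. The main obstacle I anticipate is the bookkeeping around zero-padding ${\bf u}$ up to dimension $K$: one has to verify that $K>s(n-1)+1$ so that the augmented contribution $\alpha_{r}({\bf e}_{K}^{\top}{\bf u})^{2}$ really vanishes and the polynomial-evaluation identity matches ${\bf u}^{\top}H{\bf u}$ to the first entry of $\ten{H}{\bf x}^{m-1}$ on the nose; once that is pinned down, the rest is elementary.
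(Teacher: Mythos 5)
Your proof is correct, and its structure closely parallels the paper's own: reduce the even case to Theorem \ref{thm_quant}, and for odd $m$ use a Vandermonde decomposition to control the first coordinate of the H-eigenequation and compare $x_0^{m-1}$ with $\|{\bf x}\|_{m-1}^{m-1}$. The variation worth noting is in the middle step: the paper introduces the auxiliary $(m-1)^{\rm st}$-order tensor $\ten{H}_1$ and its Hankel matrix $H_1$ (both generated by the truncation ${\bf h}(0:(m-1)(n-1))$), invokes Theorem \ref{thm_quant} once more to get $\lambda_{\min}(\ten{H}_1)\geq c\,\lambda_{\min}(H_1)$, and then uses eigenvalue interlacing $\lambda_{\min}(H_1)\geq\lambda_{\min}(H)$; you instead read off the identity $(\ten{H}{\bf x}^{m-1})_1={\bf u}^\top H{\bf u}$ directly for the zero-padded ${\bf u}={\bf x}^{\ast s}$ and apply the Rayleigh quotient bound on $H$ in one stroke. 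Both routes yield the same constant (your Rayleigh bound on the padded vector is tacitly the interlacing the paper uses explicitly), so this is a streamlining rather than a new idea. The bookkeeping you flag is indeed innocuous: $K-1=m(n-1)/2$ strictly exceeds $s(n-1)=(m-1)(n-1)/2$ whenever $n\geq 2$, so the augmented coordinate of ${\bf u}$ really is zero. The paper also avoids the augmented decomposition entirely by first disposing of the $\lambda_{\min}(H)=0$ case trivially and then assuming $H$ positive definite, where the standard Vandermonde decomposition suffices; your use of the augmented decomposition handles both at once, at the cost of the extra $x_0=0$ subcase, which you resolve correctly by noting ${\bf u}\neq 0$ forces $\lambda_{\min}(H)=0$.

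One discrepancy you should be aware of: the negation argument you sketch yields $\lambda_{\max}(\ten{H})\leq c\cdot\lambda_{\max}(H)$ in the negative semi-definite case, whereas the statement as printed reads $\lambda_{\max}(\ten{H})\geq c\cdot\lambda_{\max}(H)$. Your direction is the one consistent with the negative semi-definite branch of Theorem \ref{thm_quant} (which bounds $\lambda_{\max}$ from above), so the printed ``$\geq$'' is almost certainly a typo, and your derivation is the correct one.
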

\begin{proof}
  When the minimal eigenvalue of $H$ equals $0$, the above equality holds for every nonnegative $c$. Moreover, when the order $m$ is even, Theorem \ref{thm_quant} gives the constant $c$. Thus, we need only to discuss the situation that $H$ is positive definite and $m$ is odd.

  Since $H$ is positive definite, the Hankel tensor $\ten{H}$ has a standard Vandermonde decomposition with positive coefficients
  $$
  \ten{H} = \sum_{k=1}^r \alpha_k {\bf v}_k^{\circ m}, \quad \alpha_k > 0, \quad {\bf v}_k = \big[1,\xi_k,\dots,\xi_k^{n-1}\big]^\top,
  $$
  where $\xi_k$'s are mutually distinct. Then by proof of the above theorem, for an arbitrary nonzero vector ${\bf x} \in \mathbb{R}^n$
  $$
  (\ten{H}{\bf x}^{m-1})_1 = \sum_{k=1}^r \alpha_k ({\bf v}_k^\top {\bf x})^{m-1} > 0,
  $$
  since $[{\bf v}_1,{\bf v}_2,\dots,{\bf v}_r]$ spans the whole space. So if $\lambda$ and ${\bf x}$ be an H-eigenvalue and a corresponding H-eigenvector of $\ten{H}$, then $\lambda$ must be positive and the first entry $x_1$ of ${\bf x}$ must be nonzero.

  Let ${\bf h} \in \mathbb{R}^{m(n-1)+1}$ be the generating vector of both the Hankel matrix $H$ and the Hankel tensor $\ten{H}$. Denote ${\bf h}_1 = {\bf h}\big(1:(m-1)(n-1)+1\big)$, which generates a Hankel matrix $H_1$ and an $(m-1)^{\rm st}$-order Hankel tensor $\ten{H}_1$. Note that $H_1$ is a leading principal submatrix of $H$ and $\ten{H}_1$ is exactly the first row tensor of $\ten{H}$, i.e., $\ten{H}(1,:,:,\dots,:)$. Then we have
  $$
  \lambda = \frac{(\ten{H}{\bf x}^{m-1})_1}{x_1^{m-1}} = \frac{\ten{H}_1{\bf x}^{m-1}}{x_1^{m-1}} \geq \frac{\ten{H}_1{\bf x}^{m-1}}{\|{\bf x}\|_{m-1}^{m-1}}.
  $$
  Now $m-1$ is an even number, then we know from Theorem \ref{thm_quant} that there is a constant $c$ such that $\lambda_{\min}(\ten{H}_1) \geq c \cdot \lambda_{\min}(H_1)$. Therefore, we finally obtain that for each H-eigenvalue $\lambda$ of $\ten{H}$, if exists,
  $$
  \lambda \geq \lambda_{\min}(\ten{H}_1) \geq c \cdot \lambda_{\min}(H_1) \geq c \cdot \lambda_{\min}(H).
  $$
  The last inequality holds because $H_1$ is a principal submatrix of $H$  (see \cite[Theorem 8.1.7]{Golub13}).   \qed
\end{proof}

It is not clear whether we have a similar quantified form of the extremal H-eigenvalues of a Hankel tensor when its associated Hankel matrix has both positive and negative eigenvalues.

\section{Numerical Examples}

We shall employ some numerical examples to illustrate the proposed algorithms and theory.

\begin{example}
The first simple example is a $4^{\rm th}$-order $3$-dimensional Hankel tensor $\ten{H}$ generated by $[1,0,1,0,1,0,1,0,1]^\top \in \mathbb{R}^9$. The Takagi factorization of the Hankel matrix generated by the same vector is
$$
\begin{bmatrix}
1 & 0 & 1 & 0 & 1 \\
0 & 1 & 0 & 1 & 0 \\
1 & 0 & 1 & 0 & 1 \\
0 & 1 & 0 & 1 & 0 \\
1 & 0 & 1 & 0 & 1
\end{bmatrix} =
\begin{bmatrix}
\frac{1}{\sqrt{3}} & 0 \\
0 & \frac{1}{\sqrt{2}} \\
\frac{1}{\sqrt{3}} & 0 \\
0 & \frac{1}{\sqrt{2}} \\
\frac{1}{\sqrt{3}} & 0
\end{bmatrix} \cdot
\begin{bmatrix}
3 & 0 \\
0 & 2
\end{bmatrix} \cdot
\begin{bmatrix}
\frac{1}{\sqrt{3}} & 0 & \frac{1}{\sqrt{3}} & 0 & \frac{1}{\sqrt{3}} \\
0 & \frac{1}{\sqrt{2}} & 0 & \frac{1}{\sqrt{2}} & 0
\end{bmatrix}.
$$
Thus by Algorithm \ref{alg_sos}, an SOS decomposition of $\ten{H}{\bf y}^4$ is obtained, that is,
\[
\begin{split}
&\Bigg(\begin{bmatrix} y_1 & y_2 & y_3 \end{bmatrix} \cdot \begin{bmatrix} 1 & 0 & 1 \\ 0 & 1 & 0 \\ 1 & 0 & 1 \end{bmatrix} \cdot \begin{bmatrix} y_1 \\ y_2 \\ y_3 \end{bmatrix}\Bigg)^2 + \Bigg(\begin{bmatrix} y_1 & y_2 & y_3 \end{bmatrix} \cdot \begin{bmatrix} 0 & 1 & 0 \\ 1 & 0 & 1 \\ 0 & 1 & 0 \end{bmatrix} \cdot \begin{bmatrix} y_1 \\ y_2 \\ y_3 \end{bmatrix}\Bigg)^2 \\
&= (y_1^2 + y_2^2 + y_3^2 + 2y_1 y_3)^2 + (2y_1 y_2 + 2y_2 y_3)^2.
\end{split}
\]
However, the SOS decomposition is not unique, as we have seen in the last section that $\ten{H}{\bf y}^4$ can also be written into $\frac{1}{2}(y_1 + y_2 + y_3)^4 + \frac{1}{2}(y_1 - y_2 + y_3)^4$.
\end{example}

An $m^{\rm th}$-order $n$-dimensional Hilbert tensor (see \cite{Song14}) is defined by
$$
\ten{H}(i_1,i_2,\dots,i_m) = \frac{1}{i_1+i_2+\dots+i_m+1},\quad i_1,\dots,i_m = 0,1,\dots,n-1.
$$
Apparently, a Hilbert tensor is a special Hankel tensor with the generating vector $\big[1,\frac{1}{2},\frac{1}{3},\dots,\frac{1}{mn-m+1}\big]^\top$. Moreover, its associated Hankel matrix is a Hilbert matrix, which is well-known to be positive definite (see, e.g., \cite{Song14}). Thus a Hilbert tensor must be a strong Hankel tensor.
\begin{example}
We take the $4^{\rm th}$-order $5$-dimensional Hilbert tensor, which is generated by $\big[1,\frac{1}{2},\frac{1}{3},\dots,\frac{1}{17}\big]^\top$,  as the second example. Applying Algorithm \ref{alg_vander} and taking $\gamma = \frac{1}{18}$ in the algorithm, we obtain a standard Vandermonde decomposition of
$$
\ten{H} = \sum_{k=1}^9 \alpha_k {\bf v}_k^{\circ 4}, \quad {\bf v}_k = \big[1,\xi_k,\dots,\xi_k^{n-1}\big]^\top,
$$
where $\alpha_k$'s and $\xi_k$'s are displayed in the following table.
\begin{center}
\small
\begin{tabular}{c|rrrrrrrrr}
  \hline
  \hline
  $k$        & 1 & 2 & 3 & 4 & 5 & 6 & 7 & 8 & 9 \\
  \hline
  $\xi_k$ & 0.9841 & 0.9180 & 0.8067 & 0.6621 & 0.5000 & 0.3379 & 0.1933 & 0.0820 & 0.0159 \\
  $\alpha_k$    & 0.0406 & 0.0903 & 0.1303 & 0.1562 & 0.1651 & 0.1562 & 0.1303 & 0.0903 & 0.0406 \\
  \hline
  \hline
\end{tabular}
\end{center}
From the computational result, we can see that a Hilbert tensor is actually a nonnegative strong Hankel tensor with a nonnegative Vandermonde decomposition.
%If we employ Algorithm \ref{alg_nnvander}, then it is easily verified that the $17$-by-$17$ Hankel matrix generated by $\big[1,0,\frac{1}{2},0,\frac{1}{3},\dots,0,\frac{1}{17}\big]^\top$ is also positive definite, whose smallest eigenvalue is about $3.4997 \times 10^{-12}$. And we obtain another nonnegative Vandermonde decomposition of this Hilbert matrix, as displayed in the following table.
%\begin{center}
%\small
%\begin{tabular}{c|rrrrrrrrr}
%  \hline
%  \hline
%  $k$        & 1 & 2 & 3 & 4 & 5 & 6 & 7 & 8 & 9 \\
%  \hline
%  $\xi_k$ & 0 & 0.9822 & 0.9087 & 0.7857 & 0.6281 & 0.4548 & 0.2868 & 0.1444 & 0.0446 \\
%  $\alpha_k$    & 0.0123 & 0.0454 & 0.1003 & 0.1432 & 0.1688 & 0.1741 & 0.1584 & 0.1236 & 0.0738 \\
%  \hline
%  \hline
%\end{tabular}
%\end{center}
\end{example}

Finally, we shall test some strong Hankel tensors without standard Vandermonde decompositions.
\begin{example}
Randomly generate two real scalars $\xi_1,\xi_2$. Then we construct a $4^{\rm th}$-order $n$-dimensional strong Hankel tensor by
$$
\ten{H} = \begin{bmatrix} 1 \\ 0 \\ \vdots \\ 0 \\ 0 \end{bmatrix}^{\circ 4}
+ \begin{bmatrix} 1 \\ \xi_1 \\ \vdots \\ \xi_1^{n-2} \\ \xi_1^{n-1} \end{bmatrix}^{\circ 4}
+ \begin{bmatrix} 1 \\ \xi_2 \\ \vdots \\ \xi_2^{n-2} \\ \xi_2^{n-1} \end{bmatrix}^{\circ 4}
+ \begin{bmatrix} 0 \\ 0 \\ \vdots \\ 0 \\ 1 \end{bmatrix}^{\circ 4}.
$$
For instance, we set the size $n=10$ and apply Algorithm \ref{alg_vander} to obtain an augmented Vandermonde decomposition of $\ten{H}$. We repeat this experiment for $10000$ times, and the average relative error between the computational solutions $\widehat{\xi}_1,\widehat{\xi}_2$ and the exact solution $\xi_1,\xi_2$ is $4.7895 \times 10^{-12}$. That is, our algorithm recover the augmented Vandermonde decomposition of $\ten{H}$ accurately.
\end{example}

\section{The Third Inheritance Property of Hankel Tensors}

We have proved two inheritance properties of Hankel tensors in this paper.   We now raise the third inheritance property of Hankel tensors as a conjecture.

\bigskip

\noindent {\bf Conjecture}   If a lower-order Hankel tensor has no negative H-eigenvalues, then its associated higher-order Hankel tensor with the same generating vector, where the higher order is a multiple of the lower order, has also no negative H-eigenvalues.

We see that the first inheritance property of Hankel tensors established in Section 2 is only a special case of this inheritance property, i.e., the case that the lower-order Hankel tensor is of even-order.   At this moment, we are not able to prove or to disprove this conjecture if the lower-order Hankel tensor is of odd-order.   However, we may see that if this conjecture is true, then it is of significance.   Assume that this conjecture is true.  If the lower-order Hankel tensor is of odd-order while the higher-order Hankel tensor is of even-order, then we have a new way to identify some positive semi-definite Hankel tensors, and discover a link between odd-order symmetric tensors of no negative H-eigenvalues, and positive semi-definite symmetric tensors.

\begin{acknowledgements}
We would like to thank Prof. Man-Duen Choi and Dr. Ziyan Luo for their helpful discussions. We would also like to thank the editor, Prof. Lars Eld\'{e}n, and the two referees for their detailed and helpful comments.
\end{acknowledgements}

% BibTeX users please use one of
%\bibliographystyle{spbasic}      % basic style, author-year citations
%\bibliographystyle{spmpsci}      % mathematics and physical sciences
%\bibliographystyle{spphys}       % APS-like style for physics
%\bibliography{bib_Hankel}   % name your BibTeX data base

%% Non-BibTeX users please use
%\begin{thebibliography}{}
%%
%% and use \bibitem to create references. Consult the Instructions
%% for authors for reference list style.
%%
%\bibitem{RefJ}
%% Format for Journal Reference
%Author, Article title, Journal, Volume, page numbers (year)
%% Format for books
%\bibitem{RefB}
%Author, Book title, page numbers. Publisher, place (year)
%% etc
%\end{thebibliography}

\end{document}